\let\csname proof\endcsname\relax
\let\csname endproof\endcsname\relax
\newcommand{\xX}{\mathbf{x}}
\newcommand{\xI}{\mathcal{T}}
\newcommand{\xR}{\mathbb{R}}
\newcommand{\dimI}{d_\xI}
\newcommand{\dimD}{d_D}
\newcommand{\dxx}{d_{\xX, \xX'}}
\newcommand{\kincrement}{k_{\text{inc}}}
\newcommand{\mincrement}{m_{\text{inc}}}
\newcommand{\xM}{M(\xX, \xX')}
\newcommand{\diam}{D_{\xX, \xX'}(\xI)}
\newcommand{\xY}{\Vert \xX - \xX' \Vert^{\alpha_1 /2}}
\newcommand{\sigFieldM}{\mathcal{B}(\xI)}
\newcommand{\indpospdf}{\mathcal{A}^+}
\newcommand{\indpdf}{\mathcal{A}}
\newcommand{\pospdf}{\mathcal{A}^+}
\newcommand{\pdf}{\mathcal{A}}
\newcommand{\logt}{\psi} 
\newcommand{\slogt}{\Psi} 
\newcommand{\proc}[3]{
    \ifstrempty{#3}%
    {%
        \ifstrempty{#2}%
        {%
            #1
        }{%
            #1_{#2}
        }%
    }{%
        (#1_{#2})_{#3}
    }%
}
\newcommand{\SLGPM}[2]{
    \ifstrempty{#1}%
        {%
            \Xi%
        }{%
            \ifstrempty{#2}%
            {%
                \Xi_{#1}%
            }{%
                \Xi_{#1}\left( #2 \right)%
            }%
        }%
} 
\newtheoremstyle{TheoremNum}
    {\topsep}{\topsep}              
    {\itshape}                      
    {}                              
    {\bfseries}                     
    {.}                             
    { }                             
    {\thmname{#1}\thmnote{ \bfseries #3}}
\theoremstyle{TheoremNum}
    \newtheorem{thmn}{Theorem} 
\newtheoremstyle{PropositionNum}
    {\topsep}{\topsep}              
    {\itshape}                      
    {}                              
    {\bfseries}                     
    {.}                             
    { }                             
    {\thmname{#1}\thmnote{ \bfseries #3}}
\theoremstyle{PropositionNum}
    \newtheorem{propn}{Proposition}
    \newtheorem{condn}{Condition}
\theoremstyle{plain}
\newtheorem{theorem}{Theorem}[section]
\newtheorem{lemma}{Lemma}
\newtheorem{proposition}[theorem]{Proposition}
\newtheorem{corollary}[theorem]{Corollary}
\theoremstyle{definition}
\newtheorem{definition}{Definition}[section]
\newtheorem{condition}{Condition}
\theoremstyle{remark}
\newtheorem*{remark}{Remark}
\newtheorem*{proof}{Proof}
\title{Continuous logistic Gaussian random measure fields for spatial distributional modelling}
\author{Athénaïs Gautier$^1$
\and David Ginsbourger$^2$\\$^1$ DTIS, ONERA, Université Paris-Saclay, 91120, Palaiseau, France \and $^2$ Institute of Mathematical Statistics and Actuarial Science, University of Bern, Switzerland}
\begin{document}
\maketitle

\begin{abstract}

We study Spatial Logistic Gaussian Process (SLGP) models for non-parametric estimation of probability density fields using scattered samples of heterogeneous sizes. 
SLGPs are examined from the perspective of random measures and their densities, investigating the relationships between SLGPs and underlying processes.
Our inquiries are motivated by SLGP's abilities in delivering probabilistic predictions of conditional distributions at candidate points, allowing conditional simulations of probability densities, and jointly predicting multiple functionals of target distributions. 
 We demonstrate that SLGP models 
 exhibit joint Gaussianity of their log-increments, enabling us to establish theoretical results regarding spatial regularity. Additionally, we extend the notion of mean-square continuity to random measure fields and establish sufficient conditions on covariance kernels underlying SLGPs to ensure these models enjoy such regularity properties.
Finally, we propose an implementation using Random Fourier Features and showcase its applicability on synthetic examples and on temperature distributions at meteorological stations.

\keywords{
Logistic Gaussian Process \and Spatial statistics \and Non-parametric models \and Spatial regularity \and Random measures  \and Distributional learning \and Covariance kernels}
\end{abstract}

\section{Introduction}

\subsection{Motivations and context}

One of the central problems in statistics and stochastic modelling is to capture and encode the dependence of a random response on a set of variables $\xX = (x_1, ..., x_d)$ in a flexible manner. Rather than treating $\xX$ as a covariate in a regression framework, we take the perspective of spatial statistics, where $\xX $ serves as an indexing variable for the evolution of a stochastic process over a domain of interest. 
In particular, we focus on the problem of modelling and predicting fields of response distributions indexed by $\xX$, based on scattered samples of heterogeneous sizes. 

Modelling and predicting (conditional) distributions depending on covariates is known to particularly challenging when this dependence does not only concern the mean and/or the variance of the distributions, but other features can evolve, including for instance their shape, their uni-modal versus multi-modal nature, etc. We refer to \cite{kneib_distribreg_2023} for a review of distributional regression approaches. In our specific context of distribution field, Figure~\ref{fig:meteo_at_stations_with_data_noSLGP} illustrates temperature distributions in Switzerland indexed by latitude, longitude, and altitude.
While in this temperature field example, the sample size is homogeneous across space, one of the motivations underlying the presented approach is to allow for heterogeneous sample sizes \citep{gautier_goal-oriented_2021} 

\begin{figure}
	\centering
	\includegraphics[width=0.8\linewidth]{./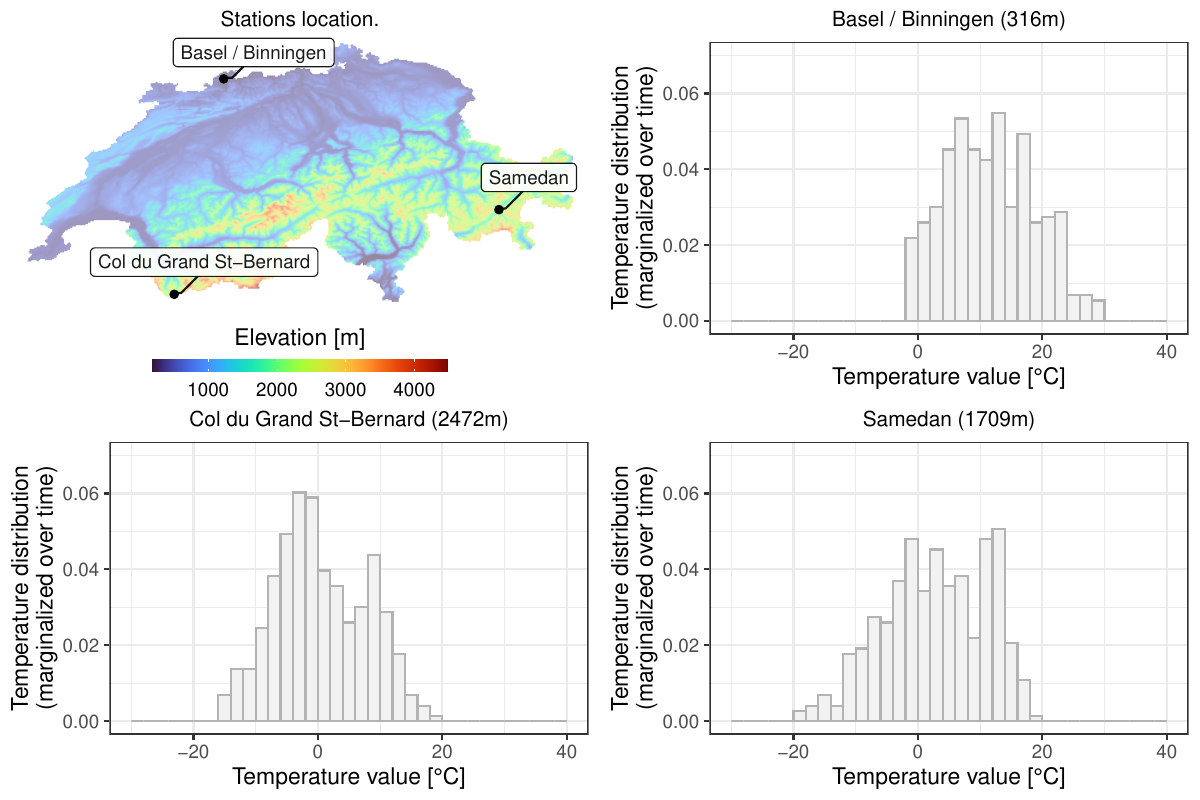}
	\caption{One example of probability distribution field: The daily mean temperatures in Switzerland indexed by latitude, longitude and altitude. We display the histogram of the available data for three meteorological stations (365 replications for each location). 
 }
	\label{fig:meteo_at_stations_with_data_noSLGP}
\end{figure}

Among the most notable approaches typically used in a frequentist framework to address (conditional) distributional modelling, one can cite finite mixture models \citep{rojas_conditional_2005} or kernel density estimation \citep{fan_estimation_1996,hall_methods_1999}. Kernel approaches usually require estimating the bandwidth which is done with cross-validation \citep{fan_crossvalidation_2004}, bootstrap \citep{hall_methods_1999} or other methods. Generalized lambda distributions have recently been used in \cite{zhu_emulation_2020, zhu_surrogate_2023} for flexible semi-parametric modelling of unimodal distributions depending on covariables. In addition to these approaches, distributional kriging has emerged as a method for spatial prediction of distributional data, offering a structured way to interpolate probability distributions in space \citep{aitchison_statistical_1982, egozcue_hilbert_2006, menafoglio_universal_2013, menafoglio_kriging_2014, talska_compositional_2018}. While distributional Kriging elegantly leverages functional Kriging, it requires fitted cumulative distribution functions at each location and does not account for the uncertainty due the fitting stage, making it hardly applicable for small and heterogeneous sample sizes. 
None of the aforementioned approaches simultaneously tackles all challenges of dealing with scattered samples with few or no replicates, handling distributions of various shapes and modalities, and providing uncertainty quantification.

The limitations of these methods, particularly in providing robust uncertainty quantification, prompt us to turn toward Bayesian approaches. Within this context, it is natural to put a prior on probability density functions and derive posterior distributions of such probability density functions given observed data. The most common class of models are infinite mixture models. Its popularity is partly due to the wide literature on algorithms for posterior sampling within a Markov Chain Monte Carlo framework \citep{jain_split-merge_2004, walker_sampling_2007, papaspiliopoulos_retrospective_2008} or fast approximation \citep{minka_family_2001}. 
Non-parametric approaches 
include generalizing stick breaking processes \citep{dunson_kernel_2008, dunson_bayesian_2007, chung_nonparametric_2009, griffin_order-based_2006}, multivariate transformation of a Beta distribution \citep{trippa_multivariate_2011} or transforming a Gaussian Process (GP) \citep{jara_class_2011, donner_efficient_2018, tokdar_bayesian_2010, gautier_goal-oriented_2021}. 
The Spatial Logistic Gaussian Process (SLGP) model, related to \cite{tokdar_bayesian_2010} and being at the center of the present contribution following up on \cite{gautier_goal-oriented_2021}, is itself a spatial generalization of the logistic Gaussian process model. 

The logistic Gaussian process for density estimation was established and studied in \cite{lenk_logistic_1988, lenk_towards_1991, leonard_density_1978} and is commonly introduced as a random probability density function obtained by applying a non-linear \textit{transformation} 
$\logt$ to a \textit{sufficiently well-behaved} GP $Z=\proc{Z}{t}{t \in \xI}$, resulting in 
\begin{equation}
\label{eq:informalLGP}
	\logt[Z](t)= \frac{e^{\proc{Z}{t}{}}}{\int_\xI e^{\proc{Z}{u}{}} \,d\lambda(u) } \text{ for all } t \in \xI
\end{equation}
Here and throughout the document, we consider 
a compact and convex response space $\xI \subset \xR$, and we denote by $\lambda$ the Lebesgue measure on $\xR$. We further assume that $\lambda(\xI) >0$.

For the Spatial Logistic Gaussian Process (SLGP), we will similarly build upon a \textit{well-behaved} GP $\proc{Z}{\xX, t}{(\xX, t) \in D \times \xI}$ (now indexed by a product set) and study the stochastic process obtained from applying the \textit{spatial logistic} transformation to $Z$ as follows:

\begin{equation}
\label{eq:informalSLGP}
	\slogt[Z](\xX, t)= \dfrac{e^{Z_{\xX, t}}}{\int_\xI e^{Z_{\xX, u}} \,d\lambda(u) } \text{ for all } (\xX, t) \in D \times \xI
\end{equation}
where $D \subset \xR^{\dimD}$ is a compact and convex index space with $\dimD \geq 1$. 

At any fixed $\xX$, $\slogt[Z](\xX, \cdot)$ returns a logistic Gaussian process, so that a SLGP can be seen as a field of logistic Gaussian processes. As motivated by theoretical and practical investigations conducted in the current paper and in \cite{gautier_modelling_2023}, the SLGP constitutes a worthwhile object of study. In particular, what the mathematical objects involved precisely are (in terms of random measures or densities, and fields thereof) calls for some careful analysis, underlying the first question that we will focus on in the present article:

\smallskip 

\noindent
\textbf{Question 1}: What (kind of stochastic models) are spatial logistic Gaussian processes?

\smallskip 

We revisit both logistic Gaussian process and SLGP models in terms of random measures and random measure fields, investigating in turn different notions of equivalence and indistinguishability between random measure fields. 

Also, since finite-dimensional distributions of Gaussian Processes are characterized by mean and covariance functions, it can be tempting to jump to the conclusion that the latter functions characterize SLGPs. As we show here, things are in fact not so simple, due in the first place to the fact that SLGPs require to control measurability and further properties of GPs beyond finite-dimensional distributions, but also because $\logt$ is invariant under translations of $Z$ by constants (resp. $\slogt$ is invariant under translations of $Z$ by functions of $\xX$). We go on with asking what characterizes SLGPs among fields of random measures with prescribed positivity properties:  

\smallskip 

\noindent
\textbf{Question 2}: Given a random field of positive densities, how to characterize that it is an SLGP and what can then be said about the underlying GP(s)?

\smallskip 

Thereby we keep a particular interest for the role of covariance kernels. We denote here and in the following by $k$ the covariance kernel of $Z$. 

In geostatistics and spatial statistics \citep{matheron_principles_1963, stein_interpolation_1999, cressie_statistics_1993}, quantifying the spatial regularity of scalar valued (Gaussian) processes such as $Z$ (with links to properties of $k$ in centered squared integrable cases) have been well studied and a wide literature is available. Similar approaches have been investigated in settings of function valued processes \citep{ramsay_functional_2004, henao_geostatistical_2009, nerini_cokriging_2010} but the main contributions in such cases are generally limited to stationary functional stochastic processes valued in $L^2$. Extensions to the distributional setting have also been proposed through embedding into an infinite-dimensional Hilbert Space using Aitchison geometry \citep{aitchison_statistical_1982, pawlowsky-glahn_compositional_2011, menafoglio_kriging_2014} and classical results on stationary functional processes. Here, we tackle notions of spatial regularity between (random) measures / probability densities that does not require Hilbert Space embedding. More specifically, we focus on:

\smallskip 

\noindent
\textbf{Question 3}: How to quantify spatial regularity in (logistic) random measure fields?

\smallskip 

For that we investigate generalizations of scalar-valued continuity notions (mean-square continuity and almost sure continuity) in the context of random measure fields, and especially of SLGPs. This leads us to our ultimate question, naturally following as an extension of results from the scalar valued case to (selected cases of) SLGPs:  

\smallskip 

\noindent
\textbf{Question 4}: To what extent is the spatial regularity of SLGPs driven by $k$?

\smallskip 

Sufficient conditions of mean-power continuity (with respect to the Total Variation and Hellinger distances, as well as to the squared log-ratio dissimilarity and to the Kullback-Leibler divergence) of SLGPs are established in terms of the covariance kernel underlying the considered SLGPs. Moreover, almost sure results are derived by building upon general results on Gaussian measures in Banach spaces. 

\medskip 

This article is structured in the following way: in Section \ref{sec:def}, we
mostly focus on Questions $1$ and $2$ by revisiting logistic Gaussian processes in terms of Random Probability Measures and extending this construction to SLGPs by introducing Random Probability Measure Fields (RPMFs).  In particular, Proposition~\ref{prop:charac_RMF}, we relate the indistinguishability of RPMFs to the indistinguishability of the increments of the underlying GPs. 

We also establish that, for any point $\xX$, a SLGP evaluated at $\xX$ can be interpreted as a representer of the Radon-Nikodym derivative of the corresponding RPMF at this same $\xX$. Moreover, we explore in Proposition~\ref{prop:SLGP_cty_case} the case where the transformed GP is a.s. continuous.

Throughout Section \ref{sec:cty}, we study the spatial regularity of the SLGP by relying on notions from spatial statistics and basic yet powerful results from Gaussian measure theory. Assuming that our SLGP is obtained by transforming $Z\sim \mathcal{GP}(0, k)$, we leverage the following condition on $k$:

\begin{condn}[\ref{con:suff_k}]
 There exist $C, \alpha_1, \alpha_2 >0$ such that for all $\xX, \xX' \in D, t, t' \in \xI$:
	\begin{equation*}
  k([\xX, t], [\xX, t]) + k([\xX', t'],[\xX', t']) - 2 k([\xX, t], [\xX', t']) \leq C \cdot \max(\Vert \xX - \xX' \Vert_\infty ^{\alpha_1}, \vert t - t' \vert ^{\alpha_2})
	\end{equation*}
\end{condn}

\noindent and establish the a.s. Hölder continuity of SLGPs as well as continuity rates:

\begin{thmn}[\ref{th:Expected_quadratic_cty}]
Consider the SLGP $Y$ induced by a measurable, separable, centred GP $Z$ with covariance kernel $k$ and assume that $k$ satisfies Condition~\ref{con:suff_k}.
	
	Then, for all $\gamma>0$ and $0<\delta < \gamma\alpha_1/2$ (for Equations~\ref{eq:cty_1bis}-\ref{eq:cty_0bis}, resp. $0<\delta < \gamma\alpha_1$ for Equations~\ref{eq:cty_2bis}-\ref{eq:cty_3bis}), there exists $K_{\gamma, \delta}>0$ such that for all $\xX, \xX' \in D^2$:
	\begin{align}
            \mathbb{E} \left[ d_{H}(\proc{Y}{\xX, \cdot}{}, \proc{Y}{\xX', \cdot}{})^\gamma \right] \leq K_{\gamma, \delta}  \Vert \xX - \xX' \Vert^{\gamma \alpha_1 /2 -\delta}_\infty \label{eq:cty_1bis}\\
            \mathbb{E} \left[ V(\proc{Y}{\xX, \cdot}{}, \proc{Y}{\xX', \cdot}{})^\gamma \right] \leq K_{\gamma, \delta}  \Vert \xX - \xX' \Vert^{\gamma \alpha_1/2 -\delta}_\infty \label{eq:cty_0bis}\\
            \mathbb{E} \left[ KL(\proc{Y}{\xX, \cdot}{}, \proc{Y}{\xX', \cdot}{})^\gamma \right] \leq K_{\gamma, \delta}  \Vert \xX - \xX' \Vert^{\gamma \alpha_1 -\delta}_\infty \label{eq:cty_2bis}   \\
            \mathbb{E} \left[ d_{TV}(\proc{Y}{\xX, \cdot}{}, \proc{Y}{\xX', \cdot}{})^\gamma \right] \leq K_{\gamma, \delta}  \Vert \xX - \xX' \Vert^{\gamma \alpha_1 -\delta}_\infty \label{eq:cty_3bis} 
        \end{align}
\end{thmn}

Some results over analytical test distribution fields and a test case based on meteorological data are presented in Section \ref{sec:app}. 
From the computational side, we introduce a Markov Chain Monte Carlo algorithmic approach for conditional density estimation relying on Random Fourier Features approximation \citep{rahimi_random_2008, rahimi_weighted_2009}, and apply it to a meteorological data set. 

We also include detailed proofs in Appendix \ref{app:fullproofs}, and supplementary material \citep{gautier_supplementary_2023} pertaining to definitions and properties of the notion of consistency as well as to posterior consistency  (Section~4
of the supplementary material) and details on the implementation of the density field estimation (Section~5 
of the supplementary material).

\subsection{Further notations and working assumptions}
Throughout the article, we denote by $(\Omega, \mathcal{F}, P)$ the ambient probability space and by $\sigFieldM$ the Borel $\sigma$-algebra induced by the Euclidean metric on $\xI$.
For a set $S$ (here $\xI$ or $D\times \xI$), we further denote by $\mathcal{C}^0(S)$, $\pdf(S)$, $\pospdf(S)$ the sets of continuous real functions, Probability Density Functions (PDFs) and positive PDFs on $S$, respectively.  
Finally, we denote by $\indpdf(D; \xI)$ the set of fields of PDFs on $\xI$ indexed by $D$, and by $\indpospdf(D; \xI)$ its counterpart featuring positive PDFs.
Note that for clarity, $\xX \in D$ (and variations thereof) always plays the role of the spatial index, whereas $t \in \xI$ (and variations thereof) refers to the response. 

Moreover, to alleviate technical difficulties, we will always assume that the Random Fields (RF) considered are measurable, as well as separable whenever almost sure continuity is mentioned.

\section{Logistic Gaussian random measures and measure fields}
\label{sec:def} 
Generative approaches to sample-based density estimation build upon probabilistic models for the unknown densities. 
A convenient option to devise such probabilistic models over the set $\pdf(\xI)$ consists in re-normalising non-negative random functions that are almost surely integrable.

When the random density is obtained by exponentiation and normalization of a Gaussian process, the resulting process is called logistic Gaussian process. 

In this first section, we start by giving an historical perspective on logistic Gaussian process models underlying this work. Following this, we address the first two questions concerning the stochastic nature and distributional characteristics of (Spatial) logistic Gaussian Process.
We directly take the broader perspective of spatial logistic Gaussian processes (SLGPs) for these inquiries, as the LGP case naturally arises as a specific instance within this more general perspective.

\subsection{The logistic Gaussian process for density estimation}

Recall that we informally introduced the logistic Gaussian process in Equation~\ref{eq:informalLGP} as being obtained through exponentiation and normalisation of a \textit{well-behaved} GP $Z$:
\begin{equation*}
	\logt[Z](t)= \dfrac{e^{\proc{Z}{t}{}}}{\int_\xI e^{\proc{Z}{u}{}} \,d\lambda(u) } \text{ for all } t \in \xI
\end{equation*}

These models intend to provide a flexible prior over positive density functions, where the smoothness of the generated densities is directly inherited from the GP's smoothness.

In the literature, various assumptions and theoretical settings have been proposed that (often, implicitly) specify what \textit{well-behaved} refers to and in what sense the colloquial definition above is meant. We present a concise review of a few papers among the ones we deem to be most representative on the topic and refer the reader 
to section~2 
of the supplementary material \cite{gautier_supplementary_2023} for more details. 

What we find noticeable is that working assumptions fluctuate between different contributions, and there does not seem to be a consensus on the most appropriate set of hypotheses. In particular, the choice between having $Z$ enjoy properties almost-surely (e.g. continuity) or surely (e.g. measurability) is far from being straightforward.

In the seminal paper \cite{leonard_density_1978}, the authors consider a.s. continuous GPs with exponential covariance kernel. Later, the author of \cite{lenk_logistic_1988, lenk_towards_1991} claims that logistic Gaussian processes should be seen as positive-valued random functions integrating to 1 but fail to provide an explicit construction of the corresponding measure space.  The construction in \cite{tokdar_posterior_2007} and \cite{tokdar_towards_2007} requires considering a separable GP $Z$ that is exponentially integrable almost surely, stating that the logistic Gaussian process thus a.s. takes values in $\pdf(\xI)$. Meanwhile, the authors of \cite{van_der_vaart_rates_2008} work with GPs whose sample paths are (surely) bounded functions.\\

In an attempt to establish transparent mathematical foundations and identify a smallest convenient set of working hypotheses, we do not 
view logistic Gaussian processes as random functions satisfying constraints (namely: non-negativity, and integrating to 1), 
but rather introduce them through the scope of random measures. We rely on the definitions from \cite{kallenberg_random_2017}, that are recalled in 
Section 1.2
of the supplementary material \citep{gautier_supplementary_2023} 
and that allow us to work with random probability measures. This approach facilitates working within the measurability structure of random probability measures and identifying a set of hypotheses on $Z$ for the corresponding transformed process to be well-defined.


\subsection{On spatial logistic Gaussian process models and associated random measure fields}
\label{sec:def:subsec:slgp}


Building upon the work of \cite{pati_posterior_2013}, we study the spatial extension of the Logistic Gaussian Process.

In this context, we call a measurable GP $\proc{Z}{\xX, t}{(\xX, t) \in D \times \xI}$ \emph{exponentially measurable alongside $\xI$} if $\int_\xI e^{\proc{Z}{\xX, u}{}(\omega)} \,d\lambda(u) < \infty$ for any $(\xX, \omega) \in D \times \Omega$. 

We begin by exploring the spatial extension of the logistic density transformation:

\begin{definition}[Spatial logistic density transformation]
	\label{def:spat_log_transfor}
	The spatial logistic density transformation $\slogt$ is defined over the set of measurable functions $f: D\times \xI \rightarrow \xR$ such that for all $\xX \in D$, $\int_\xI e^{f(\xX, u)} \,d\lambda(u) < \infty$, by:
	\begin{equation}
		\slogt[f](\xX, t):= \dfrac{e^{f(\xX, t)}}{\int_\xI e^{f(\xX, u)} \,d\lambda(u) } \ \ \ \text{ for all } (\xX, t) \in D \times \xI
	\end{equation}
	hence being a mapping between functions that are exponentially integrable alongside $\xI$ and $\indpospdf(D; \xI)$.
\end{definition}

Having (re)introduced the Spatial logistic density transformation, we now study its connections to (some classes of) Random Measure Fields, providing a broader foundation for understanding spatially dependent probability distributions. This perspective provides a structured way to capture underlying properties and relationships between spatially varying probability distributions.




\subsubsection{RPMFs induced by a random field: definition and characterisation}

In this subsection, we formally define a specific class of RPMFs induced by random fields through spatial transformations. We then explore their relationships with SLGPs and provide their characterisations. To assist the reader in understanding these connections, we summarise these relationships in Figure~\ref{fig:math_objects}, which serves as a visual guide for navigating the key concepts discussed hereafter.

\begin{definition}[Logistic Random Probability Measure Field - LRPMF]
	\label{def:SLGPM} 
        For $\proc{Z}{\xX, t}{(\xX, t) \in D \times \xI}$, a scalar-valued random field that is exponentially integrable alongside $\xI$, we define by:
	\begin{equation}
	\label{eq:defSLGPM}
		\SLGPM{\xX}{B} = \int_{B} \slogt[ \proc{Z}{}{} ](\xX, u) \,d\lambda(u) = \frac{\int_B e^{\proc{Z}{\xX, u}{}} \,d\lambda(u)}{\int_\xI e^{\proc{Z}{\xX, u}{}} \,d\lambda(u)} \quad (\xX \in D, \ B \in \sigFieldM)
	\end{equation}
        a RPMF that we call \emph{Logistic Random Probability Measure Field} induced by $Z$. We also use the notation $ \SLGPM{}{} = \text{LRPMF}(Z)$.
\end{definition}

\begin{definition}[Spatial Logistic Process - SLP]
\label{def:formalSLP}
    Let $\proc{Z}{\xX, t}{(\xX, t) \in D \times \xI}$ be a real-valued random field that is exponentially integrable alongside $\xI$, then:
    \begin{equation}
		\label{eq:defSLPformal}
  \slogt[Z](\xX, t)= \dfrac{e^{Z_{\xX, t}}}{\int_\xI e^{Z_{\xX, u}} \,d\lambda(u) } \text{ for all } (\xX, t) \in D \times \xI
	\end{equation}
    defines a $\pospdf(\xI)$-valued random process. It follows that for $ \SLGPM{}{} = \text{LRPMF}(Z)$, for any $\xX \in D$, $\slogt[Z](\xX, \cdot)$ is a representer of $\dfrac{d \SLGPM{\xX}{} }{d \lambda}$, the Radon–Nikodym derivative of $\SLGPM{\xX}{} $. We denote this process by $\slogt[Z]=\left(\slogt[Z](\xX, \cdot)\right)_{\xX \in D}$ and refer to it as Spatial Logistic Process (SLP).
\end{definition}

While it is tempting to characterise a LRPMF by its underlying random field, it is hopeless. In fact, different random fields may yield the same LRPMF. 

\begin{remark}
	\label{remark:non_unicity_GPtoSLGP}
	Let us consider two RFs $\proc{Z}{\xX, t}{(\xX, t) \in D \times \xI}$ and $\proc{R}{\xX}{\xX \in D}$ defined on the same probability space, and assume that $Z$ is exponentially integrable alongside $\xI$. Then, $\slogt[Z]$ and $\slogt[Z+R]$ are equal. Indeed, for any $(\xX, t) \in D \times \xI$ and $\omega \in \Omega$ we have:
	\begin{equation*}
		\frac{e^{\proc{Z}{\xX, t}{}(\omega)} }{\int_\xI e^{\proc{Z}{\xX, u}{}(\omega)} \,d\lambda(u)} = 
		\frac{e^{\left[ \proc{Z}{\xX, t}{}+\proc{R}{\xX}{}\right](\omega)} }{\int_\xI e^{\left[ \proc{Z}{\xX, u}{}+\proc{R}{\xX}{}\right](\omega)} \,d\lambda(u)} 
	\end{equation*}
    It follows that $\text{LRPMF}(Z)$ and $\text{LRPMF}(Z+R)$ are also equal.
\end{remark}

The arising questions that we will try to address through the rest of this section is: how to characterise the random measure fields that can be obtained through Equation~\ref{eq:defSLGPM}, and can we give sufficient conditions on measurable and exponentially integrable RFs for them to yield the same LRPMF?

\begin{figure}[H]
\centering
\begin{tikzpicture}[x=\linewidth/400, y=0.75pt,yscale=-1,xscale=1.01]
\draw  [rectangle, draw, rounded corners] (5, 35) -- (195, 35) -- (195, 290) -- (5,290) -- cycle ;
\draw  [rectangle, draw, rounded corners, color=blue!80!black] (10, 105) -- (190, 105) -- (190, 285) -- (10,285) -- cycle ;

\draw  [rectangle, draw, rounded corners, color=blue!80!black] (205, 65) -- (395, 65) -- (395, 290) -- (205, 290) -- cycle ;
\draw  [rectangle, draw, rounded corners] (210, 285) -- (390, 285) -- (390, 185) -- (210, 185) -- cycle ;

\draw  [color=green!60!black] (155,195) -- (107,195) -- (107, 215) -- (93,215) -- (93,237) -- (155,237) -- cycle ;

\draw  [color=green!60!black] (285,142) -- (340,142) -- (340,183) -- (285,183) -- cycle ;
\draw [line width=1em,{Triangle Cap []}-{Triangle Cap []}][color=white]   (100, 312) .. controls (175, 330) and (225, 330) .. (300, 317) ;
\draw [{Stealth}-{Stealth}][color=blue!80!black]   (100, 285) .. controls (175, 315) and (225, 315) .. (300, 290) ;

\draw (5,40) node [anchor=north west, text width=7.5cm, align=left] {\textbf{Random Probability Measure Field} (RPMF)};
\draw (5,70) node [anchor=north west, text width=7.5cm, align=left] {A collection $(\SLGPM{\xX}{})_{\xX \in D}$ of random probability measures indexed by $\xX$.};
\draw (10,110) node [anchor=north west, text width=7.5cm, align=left]{\textbf{Logistic Random Probability Measure Field} (LRPMF)};
\draw (10,140) node [anchor=north west, text width=7.6cm, align=left]{
  A RPMF $\SLGPM{}{}$ for which there exist a real valued random field  $\proc{Z}{\xX, t}{(\xX, t) \in D \times \xI}$ with, for all $\xX \in D, \ B \in \sigFieldM$:
  \begin{equation*}
		\SLGPM{\xX}{B} = \frac{\int_B e^{\proc{Z}{\xX, u}{}} \,d\lambda(u)}{\int_\xI e^{\proc{Z}{\xX, u}{}} \,d\lambda(u)} \end{equation*}
        \vspace{2pt}
        
        \color{black!70}Characterized by increments of $Z$ at fixed $\xX$: $\proc{\Delta Z}{\xX, t, t'}{}:= \proc{ Z}{\xX, t}{} - \proc{ Z}{\xX, t'}{}$};
  
\draw (205,70) node [anchor=north west, text width=7.5cm, align=center]   [align=left] {\textbf{Spatial Logistic Process} (SLP)};
\draw (205,87) node [anchor=north west, text width=7.7cm, align=center]   [align=left] {A process $Y=\proc{Y}{\xX, t}{(\xX, t) \in D \times \xI}$ for which there exist a real valued random field  $\proc{Z}{\xX, t}{(\xX, t) \in D \times \xI}$ with for all $\xX \in D, t \in \xI$:
\begin{equation*}
  \proc{Y}{\xX, t}{}= \dfrac{e^{Z_{\xX, t}}}{\int_\xI e^{Z_{\xX, u}} \,d\lambda(u) }
	\end{equation*}
    };
    
\draw (210,190) node [anchor=north west, text width=7.5cm, align=center]   [align=left] {\textbf{Spatial Logistic Gaussian Process} (SLGP)};
\draw (210,220) node [anchor=north west, text width=7.5cm, align=center]   [align=left] {A SLP $Y$ where $Z$ is a GP.
\vspace{3.5pt}
        
\color{black!70}Characterized by the Gaussianity of $\log Y_{\xX, t} - \log Y_{\xX, t'}$};

\draw (200,292) node [anchor=north, text width=5.5cm, align=center, color=blue!80!black] {\contour{white}{SLP is a Radon-Nikodym} \contour{white}{derivative of LRPMF.}};

\draw (15,97.5) node [anchor=north west, text width=5.5cm, align=left, color=blue!80!black] {\scriptsize \contour{white}{Particular case of RPMF}};

\draw (215,177) node [anchor=north west, text width=5.5cm, align=left, color=black] {\scriptsize \contour{white}{Particular case of SLP}};
\end{tikzpicture}
\vspace{-0.6cm}
\caption{Summarising the nature and relationships between the mathematical objects considered in this section}
\label{fig:math_objects}
\end{figure}

\vspace{-0.2cm}

To answer these questions, we must define what we mean by ``the same LRMPF'', as there are various notions of coincidence between RFs (and by extension, RPMFs). In this discussion, we will primarily focus on the concept of indistinguishability and refer the reader to Remark D
in the supplementary material for a reminder on this notion.

\begin{proposition}[Condition for the indistinguishability of LRPMF and SLPs]
	\label{prop:charac_RMF}
	Let $ Z := \proc{Z}{\xX, t}{(\xX, t) \in D \times \xI}$ and $\Tilde Z := \proc{\Tilde Z}{\xX, t}{(\xX, t) \in D \times \xI}$ be two RFs that are exponentially integrable alongside $\xI$, and for all $(\xX, t, t') \in D \times \xI^2$, let $ \proc{\Delta Z}{\xX, t, t'}{(\xX, t, t') \in D \times \xI^2}$ and $ \proc{\Delta \Tilde Z}{\xX, t, t'}{(\xX, t, t') \in D \times \xI^2}$ be the associated increment processes along $\xX$, where $ \proc{\Delta Z}{\xX, t, t'}{}:= \proc{ Z}{\xX, t}{} - \proc{ Z}{\xX, t'}{}$ (respectively $ \proc{\Delta \Tilde Z}{\xX, t, t'}{}:= \proc{\Tilde Z}{\xX, t}{} - \proc{\Tilde Z}{\xX, t'}{}$) for any $ (\xX, t, t') \in D \times \xI^2$. Then,
	\begin{enumerate}
		\item[\hspace{10pt}$(1)$] $ \proc{\Delta Z}{\xX, t, t'}{(\xX, t, t') \in D \times \xI^2}$ is indistinguishable from  $ \proc{\Delta \Tilde Z}{\xX, t, t'}{(\xX, t, t') \in D \times \xI^2}$.
		\item[$\Leftrightarrow (2)$] $\slogt[Z]$ is indistinguishable from $\slogt[\tilde Z]$.
        \item[$\Rightarrow (3)$] $\text{LRPMF}( Z)$ is indistinguishable from $\text{LRPMF}(\tilde Z)$.
	\end{enumerate}

 Additionally, if $Z$ and $\tilde Z$ are almost surely continuous, $(3) \Rightarrow (2)$.
\end{proposition}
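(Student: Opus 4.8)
The plan is to prove the cycle of implications $(1) \Leftrightarrow (2) \Rightarrow (3)$, and then the partial converse $(3) \Rightarrow (2)$ under the additional almost-sure continuity hypothesis. The key observation throughout is that the spatial logistic transformation $\slogt$ depends on $Z$ only through its log-increments: indeed, dividing numerator and denominator of $\slogt[Z](\xX, t)$ by $e^{Z_{\xX, t'}}$ for any fixed reference $t' \in \xI$ shows that $\slogt[Z](\xX, t)$ can be written purely in terms of the increments $\Delta Z_{\xX, t, t'} = Z_{\xX, t} - Z_{\xX, t'}$ (this is the rigorous content of the invariance noted in Remark~\ref{remark:non_unicity_GPtoSLGP}).

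For $(1) \Rightarrow (2)$, I would argue that on the event of probability one where the increment fields coincide for all $(\xX, t, t')$, one has $\slogt[Z](\xX, t) = \slogt[\tilde Z](\xX, t)$ for every $(\xX, t)$ simultaneously, using the increment-representation above. This event is the intersection defining indistinguishability of the increments, so its complement is null, giving indistinguishability of the SLPs. For the converse $(2) \Rightarrow (1)$, I would invert the transformation: from the identity $\slogt[Z](\xX, t)/\slogt[Z](\xX, t') = e^{Z_{\xX,t} - Z_{\xX,t'}}$, taking logarithms recovers $\Delta Z_{\xX, t, t'} = \log \slogt[Z](\xX, t) - \log \slogt[Z](\xX, t')$ pointwise in $\omega$. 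Hence on the almost-sure event where $\slogt[Z]$ and $\slogt[\tilde Z]$ agree everywhere, the increment fields agree everywhere too, closing the equivalence. The implication $(2) \Rightarrow (3)$ is the most routine: integrating the agreeing densities over an arbitrary $B \in \sigFieldM$ shows $\text{L-RPMF}(Z)(\xX)(B) = \text{L-RPMF}(\tilde Z)(\xX)(B)$ on the same almost-sure event, and by Remark~\ref{remark:RMF_random_fields} (regularity of the conditional distributions from Kallenberg's construction) this pointwise-in-$B$ agreement on a probability-one event is exactly indistinguishability of the measure fields.

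The delicate point is the final implication $(3) \Rightarrow (2)$, which genuinely requires the continuity hypothesis and is where the argument is least automatic. Here indistinguishability of the measure fields only tells us that the two cumulative-integral fields $B \mapsto \int_B e^{Z_{\xX, u}}\,d\lambda / \int_\xI e^{Z_{\xX, u}}\,d\lambda$ agree almost surely for all $\xX$ and $B$; this controls the Radon--Nikodym derivatives only $\lambda$-almost-everywhere in $t$, not at every point. The strategy is to invoke Proposition~\ref{prop:SLGP_cty_case}: when $Z$ (and $\tilde Z$) are almost-surely continuous in $t$, the densities $\slogt[Z](\xX, \cdot)$ and $\slogt[\tilde Z](\xX, \cdot)$ are, almost surely, the \emph{continuous} representers of their respective Radon--Nikodym derivatives. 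Since two continuous functions on $\xI$ that coincide $\lambda$-almost-everywhere must coincide everywhere (using $\lambda(\xI) > 0$ together with convexity, hence positivity of $\lambda$ on open sets, so that the agreement set is dense), the two density fields agree at every $t$ on the common almost-sure event.

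The main obstacle I anticipate is bookkeeping the null sets carefully enough that the pointwise equalities hold on a \emph{single} event of probability one valid simultaneously for all $(\xX, t)$, rather than for each fixed pair separately; this is precisely the distinction between modification and indistinguishability, and the uncountable index set $D \times \xI$ means one cannot naively take countable intersections. The resolution is that the definitions of indistinguishability in hypotheses $(1)$, $(2)$, $(3)$ each already bundle the full index set into one almost-sure event (as made explicit in Remark~\ref{remark:RMF_random_fields}), so the work reduces to checking that the deterministic, pointwise-in-$\omega$ algebraic identities above hold on the relevant common event --- together, in the last implication, with the continuity-upgrade from Proposition~\ref{prop:SLGP_cty_case} that promotes $\lambda$-almost-everywhere agreement to everywhere agreement without further exceptional sets.
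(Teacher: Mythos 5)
Your proposal is correct and follows essentially the same route as the paper's own proof: reducing $\slogt[Z]$ to a function of the increments via a fixed reference point for $(1)\Rightarrow(2)$, recovering increments from log-ratios of the densities for $(2)\Rightarrow(1)$, integrating over $B$ for $(2)\Rightarrow(3)$, and using the continuous-representer property to upgrade $\lambda$-a.e.\ equality to everywhere equality for $(3)\Rightarrow(2)$. Your explicit attention to the single common null set and to why a.e.\ agreement of continuous functions on a convex body forces agreement everywhere is a slightly more careful rendering of the same argument.
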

\begin{proof}
    Let us consider two such RFs $Z$ and $\tilde Z$. Assuming the two increment fields are indistinguishable, for an arbitrary $t_0 \in \xI$, both $\Delta Z_{\cdot \cdot t_0}$  and $\Delta \tilde Z_{\cdot \cdot t_0}$ are indistinguishable RFs that are exponentially integrable alongside $\xI$. Note that $\slogt[\Delta \tilde Z_{\boldsymbol \cdot \boldsymbol \cdot t_0}]=\slogt[\tilde Z]$, and therefore:
  \begin{align*}
    1 &= P\left[ \slogt[\Delta \tilde Z_{\boldsymbol \cdot \boldsymbol \cdot t_0}](\xX, t) = \slogt[ \Delta Z_{\boldsymbol \cdot \boldsymbol \cdot t_0}](\xX, t) \ \forall (\xX, t) \in D \times \xI \right]\\
    &= P\left[ \slogt[\tilde Z](\xX, t) = \slogt[ Z ](\xX, t) \ \forall (\xX, t) \in D \times \xI \right]
\end{align*}  
It follows that $(1) \Rightarrow (2)$. Moreover, it also follows that:
\begin{align*}
    1 &= P\left[ \slogt[\tilde Z](\xX, t) = \slogt[ Z ](\xX, t) \ \forall (\xX, t) \in D \times \xI \right]\\
    &= P\left[ \int_B \slogt[\tilde Z](\xX, u) \,d\lambda(u) = \int_B \slogt[ Z ](\xX, u) \,d\lambda(u), \ \ \forall (\xX, B) \in D \times \sigFieldM \right]
\end{align*}  
Which proves that $(2) \Rightarrow (3)$.

Conversely, for $(2)\Rightarrow(1)$, let us assume that $Y=\slogt[Z]$ is indistinguishable from $\tilde Y=\slogt[\tilde Z]$. By SLP's construction, we can consider $ \log \proc{Y}{\xX, t}{} =  \proc{Z}{\xX, t}{} - \log \int_\xI e^{ \proc{Z}{\xX, u}{} }\,d\lambda(u)$ (resp. $ \log \proc{\tilde Y}{\xX, t}{} =  \proc{\tilde Z}{\xX, t}{} - \log \int_\xI e^{ \proc{\tilde Z}{\xX, u}{} }\,d\lambda(u)$), and:
\begin{align*}
    1 &= P\left[ \log \proc{Y}{\xX, t}{} = \log \proc{\tilde Y}{\xX, t}{} \ \forall (\xX, t) \in D \times \xI \right]\\
    &= P\left[ \log \proc{Y}{\xX, t}{} - \log \proc{Y}{\xX, t'}{} = \log \proc{\tilde Y}{\xX, t}{}- \log \proc{\tilde Y}{\xX, t'}{} \ \forall (\xX, t, t') \in D \times \xI^2 \right]\\
    &= P\left[ \proc{Z}{\xX, t}{} - \proc{Z}{\xX, t'}{} = \proc{\tilde Z}{\xX, t}{}- \proc{\tilde Z}{\xX, t'}{} \ \forall (\xX, t, t') \in D \times \xI^2 \right]
\end{align*}  
which is, indeed, proving $(2)\Rightarrow(1)$.

Finally, let us assume that $(3)$ holds and that $\SLGPM{\xX}{}=\text{LRPMF}(Z)$ is indistinguishable from $\tilde \Xi_{\xX}=\text{LRPMF}(\tilde Z)$.
By indistinguishability:
\begin{align*}
     &P\left[ \SLGPM{\xX}{}(B) = \Tilde \Xi_\xX(B) ,\  \forall \xX \in D, \forall B \in \mathcal B(\xI) \right] = 1\\
     &\Leftrightarrow P\left[ \proc{ Y}{\xX, t}{} = \proc{ \tilde Y}{\xX, t}{}  ,\ \text{ for }\lambda\text{-almost every } t\in \xI, \text{ for all } \xX \in D \right] = 1
\end{align*}
Under the general setting, this is not enough to prove that $(3)\Rightarrow(2)$. 

However, assuming that both $Z$ and $\tilde Z$ are a.s. continuous, we deduce that so are $Y$ and $\tilde Y$. This allows for going from almost sure equality $\lambda$-almost everywhere to almost sure equality everywhere, and therefore:
\begin{align*}
    P \left[ \proc{ Y}{\xX, t}{} = \proc{ \tilde Y}{\xX, t}{}  ,\ \text{ for all }t\in \xI, \xX \in D \right] = 1
\end{align*}
\end{proof}

\begin{remark}[Indistinguishability compared to others notions of coincidence between RPMF]
\label{remark:RMF_indist_benefits}
    In Proposition~\ref{prop:charac_RMF}, we worked with the indistinguishability of random measure fields.
    Although one could consider other types of equality between RPMF, such as the \emph{equality up to a modification}: 
    \begin{equation}
    \label{eq:version_RMF}
        P\left[ \SLGPM{\xX}{} = \Tilde \Xi_\xX \right] = 1 \quad \forall\  \xX  \in D 
    \end{equation}
    we have found that indistinguishability is the most appropriate notion in this context, as it naturally connects the indistinguishability of LRPMFs with that of the underlying fields of increments.
\end{remark}



From this characterisation, it appears that indistinguishability of SLPs or LRPMFs is driven by the increments of the transformed RF. It also appears that almost sure continuity is a practical assumption to alleviate technical difficulties. However, it also highlights how general our construction is, indeed:

\begin{lemma}
    Let us consider a RPMF $(\SLGPM{\xX}{})_{\xX \in D}$, if there exists a measurable process $Y=\proc{Y}{\xX, t}{(\xX, t) \in D \times \xI}$ with:
    \begin{equation}
        P \left[ \proc{Y}{\xX, \cdot}{} \text{ is a positive representer of } \dfrac{d \SLGPM{\xX}{} }{d \lambda} \text{ for all } \xX \in D \right]=1
    \end{equation}
   then there exist a RF $Z=\proc{Z}{\xX, t}{(\xX, t) \in D \times \xI}$ exponentially integrable alongside $\xI$ such that $Y$ is indistinguishable from $\slogt[Z]$, and $\Xi$ is indistinguishable from LRPMF$(Z)$.
\end{lemma}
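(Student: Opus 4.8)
The plan is to realise $Z$ as a logarithm of $Y$, regularised on the exceptional event. Let $A \in \mathcal F$ be the probability-one event furnished by the hypothesis, on which $\proc{Y}{\xX, \cdot}{}$ is an everywhere-positive representer of $d\SLGPM{\xX}{}/d\lambda$ for every $\xX \in D$; note that on $A$ each $\proc{Y}{\xX, \cdot}{}$ is strictly positive and integrates to $\SLGPM{\xX}{}(\xI) = 1$. Since logarithms of $Y$ would be ill-defined off $A$, I would first set the auxiliary field $\tilde Y_{\xX, t} := \proc{Y}{\xX, t}{}\,\mathbf{1}_A + \mathbf{1}_{A^{c}}$, which is jointly measurable (because $Y$ is measurable and $A \in \mathcal F$) and strictly positive everywhere, and then define the RF $Z := \log \tilde Y$.

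Next I would verify that $Z$ meets the standing requirements, namely that it is a measurable RF that is exponentially integrable alongside $\xI$ in the sure sense of Definition~\ref{def:SLGPM}. Measurability is inherited from $\tilde Y$ and continuity of $\log$. For integrability I split on $A$: when $\omega \in A$ one has $e^{Z_{\xX, u}} = \proc{Y}{\xX, u}{}$, so $\int_\xI e^{Z_{\xX, u}}\,d\lambda(u) = \int_\xI \proc{Y}{\xX, u}{}\,d\lambda(u) = 1$; when $\omega \in A^{c}$ one has $Z \equiv 0$, so $\int_\xI e^{Z_{\xX, u}}\,d\lambda(u) = \lambda(\xI) < \infty$ by compactness of $\xI$. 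Hence the integral is finite for every $(\xX, \omega) \in D \times \Omega$, and both $\slogt[Z]$ and $\text{L-RPMF}(Z)$ are well-defined.

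It then remains to identify these objects on $A$ and read off the two indistinguishability statements. On $A$, using $e^{Z_{\xX, t}} = \proc{Y}{\xX, t}{}$ together with $\int_\xI e^{Z_{\xX, u}}\,d\lambda(u) = 1$, I get $\slogt[Z](\xX, t) = \proc{Y}{\xX, t}{}$ for all $(\xX, t) \in D \times \xI$; as $P[A] = 1$, this is exactly the indistinguishability of $Y$ and $\slogt[Z]$. For the measure field, writing $\Xi^{Z} := \text{L-RPMF}(Z)$, on $A$ one has, for every $(\xX, B) \in D \times \sigFieldM$, $\Xi^{Z}_{\xX}(B) = \int_B e^{Z_{\xX, u}}\,d\lambda(u) = \int_B \proc{Y}{\xX, u}{}\,d\lambda(u) = \SLGPM{\xX}{B}$, the last equality holding because $\proc{Y}{\xX, \cdot}{}$ represents $d\SLGPM{\xX}{}/d\lambda$. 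By the characterisation of indistinguishability of RPMFs recalled in Remark~\ref{remark:RMF_random_fields}, this yields that $\SLGPM{}{}$ is indistinguishable from $\text{L-RPMF}(Z)$.

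The only genuinely delicate point is the shift from an almost-sure hypothesis to the sure exponential-integrability demanded of $Z$: the construction must guarantee finiteness of $\int_\xI e^{Z_{\xX, u}}\,d\lambda(u)$ for every single $\omega$, including those in the null set where $Y$ need not be a positive density, which is precisely what the truncation to $\tilde Y = Y\,\mathbf{1}_A + \mathbf{1}_{A^{c}}$ secures; one must also keep $A$ measurable so that $\tilde Y$, and hence $Z$, is a bona fide RF. Everything beyond this is direct substitution, and no Gaussianity of $Z$ is needed, which is exactly the point the lemma makes about the generality of the construction.
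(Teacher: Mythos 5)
Your construction is exactly the paper's: you define $Z$ as $\log Y$ on the full-measure event and as $0$ on its complement (your $\tilde Y = Y\,\mathbf{1}_A + \mathbf{1}_{A^c}$ is just a repackaging of that case split), check sure exponential integrability, and identify $\slogt[Z]$ with $Y$ on the good event. The proof is correct; the only cosmetic difference is that you verify the indistinguishability of the measure fields by direct computation of $\text{L-RPMF}(Z)_{\xX}(B)$ where the paper cites Proposition~\ref{prop:charac_RMF}, which changes nothing of substance.
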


\begin{proof}
    Let us consider such a $\proc{Y}{\xX, t}{(\xX, t) \in D \times \xI}$ and denote by $N$ the $P$-null set where $Y$ is not the positive representer of the Radon-Nikodym derivative of $\SLGPM{\xX}{}$ for all $\xX \in D$.
    We define $Z=\proc{Z}{\xX, t}{(\xX, t) \in D \times \xI}$ by:
    \begin{equation}
        \proc{Z}{\xX, t}{}(\omega) := \left\{ \begin{array}{ll}
             \log  \proc{Y}{\xX, t}{}(\omega)  &  \text{ if } \omega \in \Omega\setminus N\\
             0 & \text{otherwise}
        \end{array}\right.
    \end{equation}
    By construction, $Z$ is a RF that is exponentially integrable alongside $\xI$, and $\slogt[Z]$ is indistinguishable from $Y$. It follows from Proposition~\ref{prop:charac_RMF} that LRPMF$(Z)$ is indistinguishable from $\Xi$.
\end{proof}

Therefore, LRPMFs are quite general objects, capable of modeling a wide class of RPMFs. In practice, we typically construct our models by specifying a $Z$, and then transforming it to obtain the corresponding SLPs or LRPMFs.  In the following section, we will focus specifically on LRPMFs induced by Gaussian Processes.

\subsubsection{LRPMFs and SLPs induced by a GP}
\label{subsubsec:GLRPMF}
We now characterise which SLPs are obtained by transforming a GP.

\begin{proposition}[Characterizing SLPs obtained by transforming a GP] \ \
\label{prop:LRPMFwithGP}
   For a measurable RF $\proc{Z}{\xX, t}{(\xX, t) \in D \times \xI}$ that is exponentially integrable alongside $\xI$, the following are equivalent:
   \begin{enumerate}
       \item[(1)] There exist a measurable GP $\proc{\tilde Z}{\xX, t}{(\xX, t) \in D \times \xI}$ that is exponentially integrable such that $\slogt[Z]=\slogt[\tilde Z]$
       \item[(2)] $\left( \proc{ Z}{\xX, t}{} - \proc{ Z}{\xX, t'}{}\right)_{(\xX, t, t') \in D \times \xI^2}$, is a GP.
   \end{enumerate}
\end{proposition}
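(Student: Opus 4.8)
The plan is to prove both implications through the pointwise increment representation of an SLP, exploiting that Gaussianity is preserved under linear operations and under passing to subcollections of indices. Throughout, write $\Delta Z_{\xX, t, t'} := Z_{\xX, t} - Z_{\xX, t'}$, and recall from the discussion on working with increments that for $Y = \slogt[Z]$ one has the pointwise identity $\log Y_{\xX, t} - \log Y_{\xX, t'} = \Delta Z_{\xX, t, t'}$ for every $(\xX, t, t', \omega)$. Thus the increment field of $Z$ is recovered deterministically from the SLP alone, and this identity is the hinge of both directions.

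For $(1) \Rightarrow (2)$, I would assume a measurable, exponentially integrable GP $\tilde Z$ with $\slogt[Z] = \slogt[\tilde Z]$. Applying the increment identity to both $Z$ and $\tilde Z$ and using the equality of the two SLPs yields $\Delta Z_{\xX, t, t'} = \Delta \tilde Z_{\xX, t, t'}$ pointwise. Since $\tilde Z$ is a GP, any finite family of its increments is a linear image of a Gaussian vector and is therefore Gaussian, so $\Delta \tilde Z$ is itself a GP; the pointwise equality then transfers this property to $\Delta Z$, giving (2). If the intended equality in (1) were only up to indistinguishability, the same conclusion follows almost surely, and one may instead invoke Proposition~\ref{prop:charac_RMF} to pass from SLP indistinguishability to increment indistinguishability before identifying the finite-dimensional laws.

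For $(2) \Rightarrow (1)$, the idea is to gauge-fix the $\xX$-dependent ambiguity highlighted in Remark~\ref{remark:non_unicity_GPtoSLGP}. I would fix a reference point $t_0 \in \xI$ and set $\tilde Z_{\xX, t} := \Delta Z_{\xX, t, t_0} = Z_{\xX, t} - Z_{\xX, t_0}$, then verify three points. First, $\tilde Z$ is a GP, because its finite-dimensional distributions are finite-dimensional distributions of the increment field $\Delta Z$ (namely the subcollection with $t' = t_0$), hence Gaussian by (2). Second, $\tilde Z$ is exponentially integrable alongside $\xI$, since $\int_\xI e^{\tilde Z_{\xX, u}} \,d\lambda(u) = e^{-Z_{\xX, t_0}} \int_\xI e^{Z_{\xX, u}} \,d\lambda(u) < \infty$ for every $(\xX, \omega)$. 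Third, $\slogt[\tilde Z] = \slogt[Z]$: as $\tilde Z$ differs from $Z$ only by the $\xX$-dependent term $-Z_{\xX, t_0}$, the factor $e^{-Z_{\xX, t_0}}$ cancels between numerator and denominator, which is exactly the invariance of Remark~\ref{remark:non_unicity_GPtoSLGP}. This exhibits the required GP and establishes (1).

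The only delicate bookkeeping, and where I expect to spend the most care, is the measurability of $\tilde Z$: I would argue that joint (product-)measurability of $Z$ makes the section $(\xX, \omega) \mapsto Z_{\xX, t_0}(\omega)$ measurable, so that $\tilde Z_{\xX, t}(\omega) = Z_{\xX, t}(\omega) - Z_{\xX, t_0}(\omega)$ is a difference of jointly measurable maps and hence measurable. The remaining verifications (Gaussianity of the subcollection, cancellation of the normalizing factor, and exponential integrability) are routine given the results already available.
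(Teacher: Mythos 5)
Your proposal is correct and follows essentially the same route as the paper: the forward direction recovers the increment field pointwise from the SLP identity and transfers Gaussianity from $\tilde Z$'s increments, and the converse fixes $t_0 \in \xI$ and sets $\tilde Z_{\xX,t} := Z_{\xX,t} - Z_{\xX,t_0}$, with the normalizing factor $e^{-Z_{\xX,t_0}}$ cancelling exactly as in the paper. Your additional remarks on measurability of the section $(\xX,\omega)\mapsto Z_{\xX,t_0}(\omega)$ and on exponential integrability are sound refinements of details the paper leaves implicit.
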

\begin{proof}
If $(1)$ holds, then $\proc{ Z}{\xX, t}{} - \proc{ Z}{\xX, t'}{} = \proc{\tilde Z}{\xX, t}{} - \proc{\tilde Z}{\xX, t'}{}$ for all $(\xX, t)\in D \times \xI$. Since $\tilde Z$ is a GP, so is its increment field, and so is $Z$'s one.

Conversely, consider $Z$ as in  $(2)$. For an arbitrary $t_0  \in \xI$, let us define $\tilde Z:= \proc{Z}{\xX, t}{} - \proc{Z}{\xX, t_0}{}$. The process $\tilde Z$ is a GP on $D\times \xI$, and for any $(\xX, t) \in \mathcal D \times \xI$:
    \begin{equation*}
        \slogt[\tilde Z](\xX, t)  = \frac{ e^{\proc{ Z }{\xX, t}{}-\proc{ Z}{\xX, t_0}{}} }{\int_\xI e^{\proc{ Z}{\xX, u}{}-\proc{ Z}{\xX, t_0}{}} \,d\lambda(u)} =  \frac{ e^{\proc{Z}{\xX, t}{}} }{\int_\xI e^{\proc{ Z}{\xX, u}{}} \,d\lambda(u)}  =\slogt[Z](\xX, t)  
    \end{equation*}

    \vspace{-12pt}
\end{proof}

This observation suggests that SLPs obtained by transforming a GP are not characterized by a GP on $D \times \xI$ but rather by an increment (Gaussian) process on  $D \times \xI^2$ with sufficient regularity.

To streamline notation and connect our work to previous contributions from other authors, from now on we will refer to SLPs obtained by transforming a GP as \emph{Spatial Logistic Gaussian Processes (SLGPs)}.

Throughout our inquiries and studies, we noted that in practice, SLGPs benefit significantly from continuity assumptions, which facilitate their characterisation and parametrisation. Specifically, continuity streamlines the connection between an SLGP and its LRPMF, making the SLGP the sole continuous representative of the Radon-Nikodym derivative. From it, we derive another characterisation of LRPMFs obtained by transforming a.s. continuous GPs.

\begin{proposition}[Underlying increment mean and covariance]
   \label{prop:inc_mean_cov}
    
    For every LRPMF $(\SLGPM{\xX}{})_{\xX\in D}$ (resp. SLGP $Y=\proc{Y}{\xX, t}{(\xX, t) \in D \times \xI}$) induced by an almost surely continuous GP $Z=\proc{Z}{\xX, t}{(\xX, t) \in D \times \xI}$, there exist a unique mean function $\mincrement$ and a unique covariance kernel $\kincrement$:
    \begin{align}
        \mincrement:& D \times \xI^2 \rightarrow \xR\\
        \kincrement:& \left(( D \times \xI^2) \times ( D \times \xI^2)\right) \rightarrow \xR
    \end{align}
    that characterise all the LRPMFs indistinguishable from $\Xi$ (resp. the SLGPs indinstinguishable from $Y$). We call them the mean and covariance underlying the LRPMF (resp. the SLGP).	
\end{proposition}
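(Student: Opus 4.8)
The plan is to identify $\mincrement$ and $\kincrement$ with the mean function and covariance kernel of the increment field of $Z$, and then to argue that this pair is exactly the invariant attached to the indistinguishability class. First I would record the existence step. Since $Z$ is a GP, its increment field $\Delta Z := \left( \proc{Z}{\xX,t}{} - \proc{Z}{\xX,t'}{}\right)_{(\xX,t,t')\in D\times\xI^2}$ is again a GP, because each of its finite-dimensional marginals is a fixed linear image of a finite-dimensional marginal of $Z$, hence Gaussian; moreover $\Delta Z$ inherits almost sure continuity from $Z$, being a difference of continuous functions composed with continuous projections. Consequently its mean function $\mincrement(\xX,t,t') = \E[\proc{Z}{\xX,t}{} - \proc{Z}{\xX,t'}{}]$ and its covariance kernel $\kincrement$ are well defined, and they completely determine the finite-dimensional distributions of $\Delta Z$.

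Next I would show that $\mincrement$ and $\kincrement$ depend only on the indistinguishability class of $\Xi$ (resp.\ of $Y$), not on the representative $Z$. Suppose $\tilde Z$ is another almost surely continuous GP inducing an L-RPMF indistinguishable from $\Xi$. Invoking the almost-sure-continuity clause of Proposition~\ref{prop:charac_RMF}, indistinguishability of the two L-RPMFs (resp.\ SLPs) is equivalent to indistinguishability of the increment fields $\Delta Z$ and $\Delta \tilde Z$. Indistinguishable processes share the same finite-dimensional distributions, and for Gaussian processes equality of finite-dimensional distributions is equivalent to equality of mean and covariance. Hence $\Delta \tilde Z$ has mean $\mincrement$ and covariance $\kincrement$, so the pair is constant across the class.

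For the converse direction, establishing that the pair genuinely characterizes the class, I would take any almost surely continuous GP $\tilde Z$ whose increment field has mean $\mincrement$ and covariance $\kincrement$, and note that $\Delta \tilde Z$ and $\Delta Z$ then have identical finite-dimensional distributions. Both being almost surely continuous, hence separable, Gaussian fields on $D\times\xI^2$, the argument recalled in Remark~\ref{remark:GP_def_up_to_version} upgrades equality of finite-dimensional distributions to indistinguishability of $\Delta \tilde Z$ and $\Delta Z$; Proposition~\ref{prop:charac_RMF} then returns indistinguishability of the induced L-RPMFs (resp.\ SLPs). Uniqueness of the pair follows at once: any other pair characterizing the class must in particular be realized by the representative $Z$ itself, which forces it to coincide with $(\mincrement,\kincrement)$.

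The main obstacle is precisely the step in the converse direction that upgrades equality of finite-dimensional distributions to indistinguishability of the increment fields: this implication fails for general Gaussian fields and is exactly where almost sure continuity, and the separability it entails, becomes indispensable, as captured by the results cited in Remark~\ref{remark:GP_def_up_to_version}. The remaining points, namely that almost sure continuity and Gaussianity are inherited by the increment field, are routine and can be dispatched quickly.
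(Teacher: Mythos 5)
Your overall strategy coincides with the paper's: reduce indistinguishability of L-RPMFs (resp.\ SLPs) to indistinguishability of the increment fields via Proposition~\ref{prop:charac_RMF}, observe that the increment field of $Z$ is a continuous Gaussian field whose mean and covariance are therefore well defined, and conclude that the pair $(\mincrement,\kincrement)$ is attached to the indistinguishability class. Your existence step and your invariance step (any two representatives of the class yield indistinguishable increment fields, hence equal finite-dimensional distributions, hence equal mean and covariance) are correct and are exactly what the paper's short proof does.

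The gap is in your converse direction. You claim that if $\Delta\tilde Z$ and $\Delta Z$ have the same finite-dimensional distributions and both are almost surely continuous, then Remark~\ref{remark:GP_def_up_to_version} upgrades this to indistinguishability. It does not: that remark (and the references it cites) upgrades \emph{being a version of} $\Delta Z$, i.e.\ $P[\Delta\tilde Z_{\xX,t,t'}=\Delta Z_{\xX,t,t'}]=1$ for each fixed $(\xX,t,t')$, to indistinguishability under almost sure continuity. Equality of finite-dimensional distributions is strictly weaker than being a version: an independent copy $Z'$ of $Z$ is an almost surely continuous Gaussian field whose increments have mean $\mincrement$ and covariance $\kincrement$, yet $\Delta Z'$ is not indistinguishable from $\Delta Z$, and $\text{L-RPMF}(Z')$ is not indistinguishable from $\text{L-RPMF}(Z)$. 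So the pair $(\mincrement,\kincrement)$ cannot single out the indistinguishability class among all continuous GPs on the ambient probability space; what it does determine is the \emph{law} of the increment field on $\mathcal{C}^0(D\times\xI^2)$ and hence, $\slogt$ being a measurable functional of the continuous increments, the law of the induced SLP and L-RPMF. The converse should therefore be stated either distributionally (equal $(\mincrement,\kincrement)$ implies equality in law of the L-RPMFs) or restricted to GPs whose increment fields are versions of $\Delta Z$. The paper's own proof is terse on exactly this point (``indistinguishability of continuous GPs is driven by these functions''), but your write-up makes the false implication explicit and even designates it as the crux, so it needs to be repaired along these lines.
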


\begin{proof}
    Combining Propositions~\ref{prop:charac_RMF} and \ref{prop:LRPMFwithGP} emphasizes that the process that drives $\Xi$ and $Y$'s behaviours is $
    \left( Z_{\xX, t} - Z_{\xX, t'} \right)_{(\xX, t, t') \in D \times \xI^2}
    $. It is an a.s. continuous GP, with $\mincrement$ and $\kincrement$ being its mean function and covariance function. The indistinguishability of continuous GPs is driven by these functions, which ensures that $\mincrement$ and $\kincrement$ characterise $\Xi$ (resp. $Y$). 
    For proofs of the latter statement, we refer the reader to \cite{azais_level_2009} Ch. 1, Sec. 4, Prop. 1.9 (in dimension 1), and \cite{scheuerer_comparison_2009} Ch. 5 Sec. 2 Lemma 5.2.8. (generalisation to greater dimension).
\end{proof}

\begin{proposition}[SLGP induced by an a.s. continuous GP]
    \label{prop:SLGP_cty_case}
   Let us consider a GP $Z=\proc{Z}{\xX, t}{(\xX, t) \in D \times \xI}$ that is exponentially integrable alongside $\xI$ and almost-surely continuous in $t$. Then, for any $\xX \in D$, $\slogt[Z](\xX, \cdot)$ is almost surely the continuous representer of $\dfrac{d \SLGPM{\xX }{}}{d \lambda}$, where $\SLGPM{}{} = \text{LRPMF}(Z)$.
   
\end{proposition}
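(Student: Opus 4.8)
The plan is to notice that the Radon--Nikodym derivative $d\SLGPM{\xX}{}/d\lambda$ is only defined up to $\lambda$-negligible sets, so the proposition splits into two claims: first, that a continuous representer of $d\SLGPM{\xX}{}/d\lambda$ is unique whenever one exists (so that ``the continuous representer'' is meaningful); and second, that the canonical representer $\slogt[Z](\xX, \cdot)$ furnished by Definition~\ref{def:formalSLP} is, almost surely, one such continuous representer.

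For uniqueness I would fix $\xX$ and exploit the geometry of $\xI$: since $\xI$ is compact, convex and satisfies $\lambda(\xI) > 0$, it has nonempty interior and equals the closure of that interior, so every nonempty relatively open subset of $\xI$ meets $\mathrm{int}(\xI)$ and hence has positive Lebesgue measure. Consequently any set of full $\lambda$-measure is dense in $\xI$. If $f$ and $g$ are continuous on $\xI$ with $f = g$ $\lambda$-almost everywhere, then $\{f = g\}$ is closed and of full measure, hence dense and closed, hence all of $\xI$; so two continuous representers coincide everywhere.

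The core of the argument is the almost sure continuity of $\slogt[Z](\xX, \cdot)$. Fixing $\xX \in D$, I would restrict attention to the $P$-full event $N_\xX^c$ on which $t \mapsto \proc{Z}{\xX, t}{}$ is continuous on $\xI$. There the numerator $t \mapsto e^{\proc{Z}{\xX, t}{}}$ is continuous as a composition with the continuous exponential; the denominator $\int_\xI e^{\proc{Z}{\xX, u}{}} \, d\lambda(u)$ is constant in $t$, finite by exponential integrability alongside $\xI$, and strictly positive since its integrand is everywhere positive and $\lambda(\xI)>0$. Their quotient $\slogt[Z](\xX, t)$ is therefore continuous in $t$, showing $\slogt[Z](\xX, \cdot)$ is almost surely continuous.

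Combining the pieces closes the argument: by Definition~\ref{def:formalSLP}, $\slogt[Z](\xX, \cdot)$ is surely a representer of $d\SLGPM{\xX}{}/d\lambda$; by the previous paragraph it is almost surely continuous; and by uniqueness it must then almost surely equal the continuous representer. I do not expect a serious obstacle here; the one point demanding care is the uniqueness step, since a density is a priori only an equivalence class modulo $\lambda$-null sets, and it is precisely the convexity of $\xI$ (forcing full-measure sets to be dense) that upgrades almost-everywhere equality of continuous functions to equality everywhere. A minor subtlety is that the null set off which continuity holds may depend on $\xX$, which is harmless because the statement is asserted for each fixed $\xX$.
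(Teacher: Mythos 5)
Your proof is correct. The paper actually states Proposition~\ref{prop:SLGP_cty_case} without giving a proof (it is treated as a direct consequence of Definitions~\ref{def:SLGPM} and \ref{def:formalSLP}), so there is no official argument to compare against; your write-up supplies exactly the details one would expect to be behind that omission. The two substantive points you isolate are the right ones: (i) continuity of $t \mapsto \slogt[Z](\xX, t)$ on the full-probability event where $t \mapsto \proc{Z}{\xX, t}{}$ is continuous, the denominator being a finite, strictly positive constant in $t$ by exponential integrability alongside $\xI$ and positivity of the integrand; and (ii) uniqueness of a continuous representer, which you correctly derive from the fact that a compact convex $\xI$ with $\lambda(\xI) > 0$ has every nonempty relatively open subset of positive measure, so that full-measure sets are dense and two continuous functions agreeing $\lambda$-a.e. agree everywhere. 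Your closing remark that the exceptional null set may depend on $\xX$ is also the right reading of the statement, which is asserted pointwise in $\xX$. No gaps.
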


A direct consequence of Proposition~\ref{prop:SLGP_cty_case}, is that whenever these assumptions on $Z$ are fulfilled, we can refer to the corresponding SLGP as being almost surely a probability density functions field. Moreover, it is possible to characterize a.s. continuous GPs that yield indistinguishable SLGPs directly through their kernels and means. Notably, the kernel and mean not only define these objects but also intrinsically determine the continuity of the process, which is a significant aspect of their characterization.

\begin{proposition}[Increment moments of GPs underlying a SLGP]
    \label{prop:underlying_mean_ker_SLGP}

We consider $Z=\proc{Z}{\xX, t}{(\xX, t) \in D \times \xI}$, a GP with mean $m$ and covariance $k$ that is exponentially integrable alongside $\xI$. We denote $\mincrement$ and $\kincrement$ the underlying mean and covariance of the SLGP (or a LRPMF) generated by $Z$. The following relations are satisfied for all $(\xX, \xX') \in D^2$, $(t_1, t_2, t'_1, t'_2) \in \xI^4$:
 	\begin{equation}
  	\label{eq:m_can_and_m}
		\mincrement (\xX, t_1, t_2) =  m(\xX, t_1)- m(\xX, t_2)
	\end{equation}
	\begin{equation}
	    \label{eq:k_can_and_k}
		\kincrement ([\xX, t_1, t_2], [\xX', t'_1, t'_2]) = \begin{array}{c}
			k([\xX, t_1], [\xX', t'_1]) 
			+ k([\xX, t_2], [\xX', t'_2]) \\
			- k([\xX, t_1], [\xX', t'_2]) 
			- k([\xX, t_2], [\xX', t'_1])
		\end{array}
	\end{equation}
\end{proposition}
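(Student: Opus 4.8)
The plan is to reduce everything to the definitions of $\mincrement$ and $\kincrement$ supplied by Proposition~\ref{prop:inc_mean_cov} and then invoke only linearity of the expectation and bilinearity of the covariance. Recall that in the proof of Proposition~\ref{prop:inc_mean_cov} the functions $\mincrement$ and $\kincrement$ were defined to be, respectively, the mean and the covariance of the increment field $\Delta \proc{Z}{\xX, t, t'}{} = \proc{Z}{\xX, t}{} - \proc{Z}{\xX, t'}{}$, which is itself a continuous GP on $D \times \xI^2$. Hence the task is purely to express the mean and covariance of this increment field in terms of the mean $m$ and covariance $k$ of $Z$.

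First I would establish Equation~\ref{eq:m_can_and_m}. By linearity of the expectation,
\begin{equation*}
\mincrement(\xX, t_1, t_2) = \E\left[ \proc{Z}{\xX, t_1}{} - \proc{Z}{\xX, t_2}{}\right] = \E\left[\proc{Z}{\xX, t_1}{}\right] - \E\left[\proc{Z}{\xX, t_2}{}\right] = m(\xX, t_1) - m(\xX, t_2).
\end{equation*}
Next, for Equation~\ref{eq:k_can_and_k}, I would expand the covariance of the two increments using its bilinearity:
\begin{align*}
\kincrement([\xX, t_1, t_2], [\xX', t'_1, t'_2]) &= \mathrm{Cov}\left( \proc{Z}{\xX, t_1}{} - \proc{Z}{\xX, t_2}{}, \, \proc{Z}{\xX', t'_1}{} - \proc{Z}{\xX', t'_2}{}\right)\\
&= k([\xX, t_1],[\xX', t'_1]) - k([\xX, t_1],[\xX', t'_2]) \\
&\quad - k([\xX, t_2],[\xX', t'_1]) + k([\xX, t_2],[\xX', t'_2]),
\end{align*}
which, after reordering the four terms, is exactly Equation~\ref{eq:k_can_and_k}.

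There is no serious obstacle here: the only point requiring care is to correctly identify $\mincrement$ and $\kincrement$ with the moments of the increment GP rather than with those of $Z$ itself, which is precisely the content of Proposition~\ref{prop:inc_mean_cov}. It is worth emphasising that these two identities are many-to-one in $(m, k)$: adding to $Z$ any GP $\proc{R}{\xX}{}$ depending only on $\xX$ leaves $\mincrement$ and $\kincrement$ unchanged, since its contribution cancels inside each increment, in agreement with the invariance recorded in Remark~\ref{remark:non_unicity_GPtoSLGP}. Thus $m$ and $k$ cannot be recovered from $\mincrement$ and $\kincrement$, but the stated relations hold for every admissible choice of generating $Z$.
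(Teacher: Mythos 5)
Your proposal is correct and follows the same route the paper implicitly takes: the paper states this proposition without a separate proof, since by Proposition~\ref{prop:inc_mean_cov} the functions $\mincrement$ and $\kincrement$ are by definition the mean and covariance of the increment field $\left(Z_{\xX,t_1}-Z_{\xX,t_2}\right)$, and the two identities then follow exactly as you write them, from linearity of the expectation and bilinearity of the covariance. Your closing observation on the many-to-one nature of the map $(m,k)\mapsto(\mincrement,\kincrement)$ is accurate and consistent with Remark~\ref{remark:non_unicity_GPtoSLGP}.
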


This last property is central, as we have already noted that in practice, defining an SLGP is typically more straightforward by specifying $Z$. It also enables us to identify the equivalence classes of GPs that will give the same SLGPs.

In the remainder of this document, we will explore the spatial regularity of the SLGP and propose an implementation within a Bayesian framework. Additionally, we address the posterior consistency of this model in the supplementary material.

\section{Continuity modes for (logistic Gaussian) random measure fields}
\label{sec:cty}
Our object of interest here is a random measure field. A natural question when working with spatial objects is to quantify the spatial regularity. In our context, this involves measuring the similarity between two probability measures $\SLGPM{\xX}{}$, $\SLGPM{\xX'}{}$ depending on the closeness of their respective indices $\xX, \xX'$.

This investigation requires distances (or dissimilarity measures) between both measures and locations, and we will consider different ones. To compare two measures, we will consider  appropriate distances.
For locations, we will consider the sup norm over $D$ as well as the canonical distance associated to the covariance kernel of the Gaussian random field of increments.

Within this context, we focus on two notions of regularity: (i) the almost sure continuity of the SLGP, and (ii) a generalization of mean-squared continuity from the scalar-valued case to the measure-valued setting. Regarding the latter, we will prove results of the following form: given  a dissimilarity $\mathfrak{D}$ between probability measures and a SLGP $Y=\proc{Y}{\xX, t}{(\xX, t) \in D \times \xI}$, under sufficient regularity of the covariance kernel $\kincrement$ underlying $Y$, the following holds: 
\begin{equation}
	\label{eq:lim_tend_to_0}
	\lim_{\xX' \to\xX} \mathbb{E}\left[ \mathfrak{D} \left( \proc{Y}{\xX, \cdot}{}, \proc{Y}{\xX', \cdot}{} \right)\right] = 0.
\end{equation}
We will also establish bounds on the convergence rate.
In this work, we focus on the following dissimilarity measures for $\mathfrak{D}$: $d_H$ the Hellinger distance, $d_{TV}$ the Total Variation distance, $KL$ the Kullback-Leibler divergence, or $V$ a squared log-ratio dissimilarity: $V(f_1, f_2) := \int_\xI \left( \log \frac{ f_1(u)}{f_2(u)}\right)^2 \,du$. The choice of these specific dissimilarity measures is motivated by the following Lemma.

\begin{lemma}[Bounds on distances between measures]
	\label{pty:bound_logtransf_sup}
	There exists constants $C_{KL},  C_{V}, C_{TV} > 0$ such that for $f_1$ and $f_2$ two positive probability density functions on $\xI$:
 \begin{gather}
     d_H(f_1, f_2) \leq h e^{h /2}\\
     KL(f_1, f_2) \leq C_{KL} h^2 e^{h}(1+h)\\
     V(f_1, f_2) \leq C_{V} h^2 e^{h}(1+h)^2\\
     d_{TV}(f_1, f_2) \leq C_{TV} h e^{h/2}(1+h)
 \end{gather}
 where $h := \Vert \log(f_1)  - \log(f_2) \Vert_\infty$.
\end{lemma}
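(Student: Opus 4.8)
The plan is to reduce all three estimates to pointwise inequalities for the log-ratio and then integrate against the probability measure $f_2\,d\lambda$. Writing $g := \log f_1 - \log f_2$, so that $f_1 = e^{g}f_2$ pointwise and $\Vert g \Vert_\infty = h$, each left-hand side becomes the integral of an elementary function of $g$ against $f_2\,d\lambda$. Two facts will be used throughout: $\int_\xI f_2\,d\lambda = 1$, which makes any bound depending on $h$ alone integrate trivially, and the normalisation identity $\int_\xI f_2(e^{g}-1)\,d\lambda = \int_\xI (f_1 - f_2)\,d\lambda = 0$. The analytic workhorse is $|e^{x}-1| \le |x|\,e^{|x|}$ (obtained from $e^{x}-1=\int_0^{x} e^{s}\,ds$), together with its refinement for $\phi(x):=(x-1)e^{x}+1 = \int_0^{x} s\,e^{s}\,ds$, namely $0 \le \phi(x) \le \tfrac12 x^{2} e^{|x|}$.

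For the Hellinger distance I would start from $d_H(f_1,f_2)^2 = \int_\xI (\sqrt{f_1}-\sqrt{f_2})^2\,d\lambda = \int_\xI f_2\,(e^{g/2}-1)^2\,d\lambda$ (up to the normalising factor of the chosen convention). Bounding $|e^{g/2}-1| \le \tfrac{|g|}{2}e^{|g|/2} \le \tfrac{h}{2}e^{h/2}$ pointwise and integrating with $\int_\xI f_2\,d\lambda = 1$ gives $d_H(f_1,f_2)^2 \le \tfrac{h^{2}}{4}e^{h}$, hence $d_H(f_1,f_2) \le \tfrac{h}{2}e^{h/2} \le h\,e^{h/2}$. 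This case is immediate once the reduction is in place.

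For the Kullback--Leibler divergence the naive estimate $KL(f_1,f_2) = \int_\xI f_1\,g\,d\lambda$ with $|g|\le h$ only gives the first-order bound $h$, which is too weak. The trick is to subtract the vanishing quantity $\int_\xI f_2(e^{g}-1)\,d\lambda = 0$ and rewrite $KL(f_1,f_2) = \int_\xI f_2\,(g\,e^{g}-e^{g}+1)\,d\lambda = \int_\xI f_2\,\phi(g)\,d\lambda$; the refinement $0\le\phi(g)\le\tfrac12 g^{2}e^{|g|}\le\tfrac12 h^{2}e^{h}$ then integrates against $f_2$ to yield $KL(f_1,f_2)\le\tfrac12 h^{2}e^{h}\le C_{KL}\,h^{2}e^{h}(1+h)$. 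The total-variation term is treated by the same reduction, starting from $d_{TV}(f_1,f_2)=\tfrac12\int_\xI|f_1-f_2|\,d\lambda=\tfrac12\int_\xI f_2\,|e^{g}-1|\,d\lambda$ and applying $|e^{g}-1|\le|g|e^{|g|}$; the closely related second-moment quantity $\int_\xI f_1\,(\log(f_1/f_2))^{2}\,d\lambda=\int_\xI f_2\,e^{g}g^{2}\,d\lambda\le h^{2}e^{h}$ exhibits the exponential--polynomial structure matching the stated form, and I return to the precise order in the last paragraph.

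I expect the main obstacle to lie in securing the correct order in $h$ for the divergence terms: a crude use of $|g|\le h$ loses a power of $h$, and the sharper quadratic order is recovered only by exploiting that both densities integrate to one, i.e.\ $\int_\xI(f_1-f_2)\,d\lambda=0$, so that the linear contribution cancels and the integrand is dominated by $g^{2}e^{|g|}$ rather than $|g|$; the remaining integrations then collapse through $\int_\xI f_2\,d\lambda=1$. The point I would verify most carefully is the polynomial order asserted on the total-variation line, since the direct $L^{1}$-reduction above is only first order in $h$ whereas the stated bound is second order; reconciling the two amounts to pinning down exactly which functional is being controlled and tracking the constants through Lemma~3.1 of the cited reference.
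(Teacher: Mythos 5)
The paper does not actually prove this lemma: it imports it verbatim from Lemma~3.1 of van der Vaart and van Zanten (2008), so there is no internal proof to compare against. Your self-contained derivations of the first two bounds are correct and in fact sharper than the stated ones: the reduction $d_H(f_1,f_2)^2=\int_\xI f_2\,(e^{g/2}-1)^2\,d\lambda$ combined with $|e^{x}-1|\le |x|e^{|x|}$ gives $d_H(f_1,f_2)\le \tfrac{h}{2}e^{h/2}$, and the cancellation $\int_\xI f_2(e^{g}-1)\,d\lambda=0$ together with $0\le (x-1)e^{x}+1\le\tfrac12 x^{2}e^{|x|}$ gives $KL(f_1,f_2)\le\tfrac12 h^{2}e^{h}$, which implies the stated bound with $C_{KL}=\tfrac12$.

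Your hesitation on the third line is exactly right, and you should not try to reconcile it: as written, with $d_{TV}$ the genuine total variation distance, the inequality is false. Total variation is first order in $h$: taking $f_2\equiv 1$ on $\xI=[0,1]$ and $f_1=1+\epsilon$ on $[0,1/2]$, $f_1=1-\epsilon$ on $(1/2,1]$ gives $d_{TV}(f_1,f_2)=\epsilon/2$ while $h\sim\epsilon$, so no finite constant $C_{TV}$ can satisfy $d_{TV}(f_1,f_2)\le C_{TV}\,h^{2}e^{h}(1+h)^{2}$ as $\epsilon\to 0$. In Lemma~3.1 of the cited reference the third quantity is not the total variation distance but the discrepancy $V(f_1,f_2)=\int_\xI f_1\bigl(\log(f_1/f_2)\bigr)^{2}\,d\lambda$ used in prior-mass conditions for posterior contraction; the present statement has mislabelled it. Your computation $\int_\xI f_2\,e^{g}g^{2}\,d\lambda\le h^{2}e^{h}$ is precisely a proof of that correct version, while for $d_{TV}$ itself the right bound is the first-order one, $d_{TV}(f_1,f_2)\le\sqrt{2}\,d_H(f_1,f_2)\le\sqrt{2}\,h\,e^{h/2}$, or your direct $\tfrac12 h\,e^{h}$.
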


\begin{proof}
    The first three inequalities are tight, and are stated and proved in Lemma 3.1 of \cite{van_der_vaart_rates_2008}.
    The bound for $d_{TV}$ derives from the one for $KL$ via Pinsker's inequality, which is tight up to a multiplicative constant and states that $d_{TV}(f_1, f_2) \leq \sqrt{KL(f_1, f_2)}$. 
\end{proof}

As is standard in spatial statistics, we will derive the regularity of the SLGP $Y$ from the regularity of the kernel and mean of the GP $Z=\proc{Z}{\xX, t}{(\xX, t) \in D \times \xI}$ that induces $Y$. More precisely, we will demonstrate that $Y$ inherits its regularity from the canonical distance associated with the kernel.

Note that while we currently focus on the case $\dimI=1$ and $t$ is a scalar for notational simplicity, all subsequent developments and proofs apply to $\dimI>1$ by simply replacing instances of $\vert \cdot \vert$ with its multidimensional counterpart $\Vert \cdot \Vert_\infty$.

\begin{definition}[Canonical distance]
\label{def:canonicalsemimetric}
    Let $k$ be a kernel on a generic space $S$. The canonical distance associated to $k$, denoted by $d_k$ is a pseudo-metric defined by:\[ d^2_k(s, s') := k(s, s) + k(s', s') - 2 k(s, s'),  \ (s, s')\in S^2 \]
\end{definition}

Regularity conditions on kernels $k$ are generally formulates through that of the associated canonical distance. Here, we consider the following regularity condition:

\begin{condition}[Condition on kernels on  $D \times \xI$]
	\label{con:suff_k}
There exist $C, \alpha_1, \alpha_2 >0$ such that for all $\xX, \xX' \in D, t, t' \in \xI$:
	\begin{equation}
		\label{eq:Holder_k}
		d^2_{k}([\xX, t], [\xX', t']) \leq C \cdot \max(\Vert \xX - \xX' \Vert_\infty ^{\alpha_1}, \vert t - t' \vert ^{\alpha_2})
	\end{equation}
\end{condition}

\begin{remark}
\label{rk:holder}
In our setting, $D$ and $\xI$ being compact, if a covariance kernel $k$ on $D \times \xI$ satisfies Condition~\ref{con:suff_k}, it is also true that there exists $C'$ such that for all $(\mathbf{y}, \mathbf{y}') \in (D \times \xI)^2$:
\begin{equation}
\label{eq:Holder_k_sum}
d^2_{k}(\mathbf{y}, \mathbf{y}') \leq C' \cdot \Vert \mathbf{y} - \mathbf{y}' \Vert_\infty ^{\min(\alpha_1, \alpha_2)}
\end{equation}

Hence, Condition~\ref{con:suff_k} can be referred to as Hölder-type condition. Although Equation~\ref{eq:Holder_k_sum} would allow for deriving most results in the coming subsection, when deriving rates in Section \ref{sec:cty:subsec:exp} it is interesting to distinguish the regularity over $D$ from that over $\xI$ as there is a strong asymmetry between both spaces.
\end{remark}

From here, we will conduct our analysis in the setting considered in Proposition \ref{prop:underlying_mean_ker_SLGP}.

With this in mind, we show that Condition \ref{con:suff_k} is sufficient for the almost sure continuity (in sup norm) of the SLGP (Section \ref{sec:cty:subsec:AS}) as well as mean Hölder continuity of the SLGP (Section \ref{sec:cty:subsec:exp})

\subsection{Almost sure continuity of the Spatial Logistic Gaussian Process}
\label{sec:cty:subsec:AS}

First, let us remark that if a covariance kernel $k$ on $D\times \xI$ satisfies Condition~\ref{con:suff_k}, then the associated centred GP admits a version that is almost surely continuous and therefore almost surely bounded. This result, re-established in Proposition~A 
of the supplementary material \citep{gautier_supplementary_2023} for the sake of completeness, constitutes a classical result in stochastic processes literature. It is essential as it ensures the objects we will work with are well-defined. It then allows us to derive a bound for the expected value of the sup-norm of our increment field, and to leverage it for our main contribution for this section.
\begin{proposition}
	\label{prop:Sup_norm_dif_holder}
	If  a covariance kernel $k$ on $D\times \xI$ satisfies Condition~\ref{con:suff_k}, then for any $0< \delta < \dfrac{\alpha_1}{2}$, there exists a constant $K_\delta$ such that for $Z\sim \mathcal{GP}(0, k)$:
	\begin{equation}
		M(\xX, \xX') := \mathbb{E} \left[ \Vert \proc{Z}{\xX, \cdot}{} - \proc{Z}{\xX', \cdot}{} \Vert_\infty \right]  \leq K_\delta \Vert \xX - \xX' \Vert^{\alpha_1 /2 -\delta}_\infty \ \forall (\xX, \xX') \in D^2
	\end{equation}
\end{proposition}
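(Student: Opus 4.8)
The plan is to fix two locations $\xX, \xX' \in D$ and to study the centred Gaussian process $G := (G_t)_{t \in \xI}$ defined by $G_t := \proc{Z}{\xX, t}{} - \proc{Z}{\xX', t}{}$, so that $\xM = \E\big[\sup_{t \in \xI} |G_t|\big]$. By Proposition~\ref{prop:Boundedness_W}, $Z$ admits an almost surely continuous (hence bounded) separable version, which guarantees that this supremum is measurable and finite and that $G$ may be controlled by Dudley's entropy bound. The whole estimate then reduces to two features of $G$: the size of its marginal standard deviation and the growth of its canonical semi-distance $d_G(t, t') := \Vert G_t - G_{t'}\Vert_{L^2(P)}$.

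For the marginal contribution I would apply Condition~\ref{con:suff_k} with equal response coordinates $t = t'$, yielding $\mathrm{Var}(G_t) = d^2_k([\xX, t], [\xX', t]) \le C \Vert\xX - \xX'\Vert_\infty^{\alpha_1}$ uniformly in $t$. Writing $\sigma_{\max} := \sup_{t \in \xI}\Vert G_t\Vert_{L^2(P)} \le \sqrt{C}\,\Vert\xX - \xX'\Vert_\infty^{\alpha_1/2}$, this simultaneously controls the single-point term $\E[|G_{t_0}|] = \sqrt{2/\pi}\,\Vert G_{t_0}\Vert_{L^2(P)} \le \sqrt{2/\pi}\,\sigma_{\max}$ and, crucially, the diameter of $\xI$ in the canonical metric, since $d_G(t, t') \le \Vert G_t\Vert_{L^2(P)} + \Vert G_{t'}\Vert_{L^2(P)} \le 2\sigma_{\max}$. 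For the metric increments I would combine the triangle inequality in $L^2(P)$ with Condition~\ref{con:suff_k} at fixed location, giving $d_G(t,t') \le d_k([\xX, t],[\xX, t']) + d_k([\xX', t],[\xX', t']) \le 2\sqrt{C}\,\Vert t - t'\Vert_\infty^{\alpha_2/2}$. Since $\xI \subset \xR^{\dimI}$ is bounded, this Hölder domination turns a covering of $\xI$ by Euclidean balls into a covering in $d_G$, yielding a polynomial covering-number bound $N(\xI, d_G, \epsilon) \le C_2\, \epsilon^{-2\dimI/\alpha_2}$.

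With these ingredients I would invoke Dudley's inequality in the form $\E\big[\sup_{t}|G_t|\big] \le \E[|G_{t_0}|] + K \int_0^{\mathrm{diam}}\sqrt{\log N(\xI, d_G, \epsilon)}\, d\epsilon$, where the integral truncates at the canonical diameter. The decisive point is that this diameter is $2\sigma_{\max} \asymp \Vert\xX - \xX'\Vert_\infty^{\alpha_1/2}$ rather than the fixed diameter of $\xI$: integrating the entropy only up to this shrinking threshold is what makes the bound decay with $\Vert\xX - \xX'\Vert_\infty$. Substituting the covering bound gives $\int_0^{\eta}\sqrt{\log(C_2 \epsilon^{-2\dimI/\alpha_2})}\, d\epsilon \lesssim \eta \sqrt{\log(1/\eta)}$ as $\eta \to 0$, with $\eta = 2\sigma_{\max}$. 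Combining the two contributions produces
\[
\xM \;\lesssim\; \Vert\xX - \xX'\Vert_\infty^{\alpha_1/2} \sqrt{\log\!\big(1/\Vert\xX - \xX'\Vert_\infty\big)},
\]
and since $\sqrt{\log(1/h)} = o(h^{-\delta})$ for every $\delta > 0$, the logarithmic factor is absorbed into an arbitrarily small power loss, delivering $\xM \le K_\delta \Vert\xX - \xX'\Vert_\infty^{\alpha_1/2 - \delta}$. Finally, to cover distant pairs I would use that $D$ is bounded together with the a.s. boundedness of $Z$, so that the regime of non-small separations is absorbed into the constant $K_\delta$.

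The step I expect to be the main obstacle is the correct handling of the Dudley integral: recognising that it must be truncated at the canonical diameter $2\sigma_{\max}$ (not at the fixed diameter of $\xI$) is what produces any decay at all, and the residual factor $\sqrt{\log(1/h)}$ is precisely the reason the exponent drops from $\alpha_1/2$ to $\alpha_1/2 - \delta$. In other words, the entire $\delta$-loss in the statement is an artefact of this entropy estimate rather than of the clean marginal bound.
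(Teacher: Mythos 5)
Your proposal is correct and follows essentially the same route as the paper's proof: both work with the difference process $\proc{Z}{\xX,\cdot}{}-\proc{Z}{\xX',\cdot}{}$, derive the two bounds on its canonical semi-distance (one of order $\Vert \xX-\xX'\Vert_\infty^{\alpha_1}$ controlling the diameter at which the Dudley integral truncates, one of order $\Vert t-t'\Vert_\infty^{\alpha_2}$ controlling the covering numbers), and absorb the resulting $\eta\sqrt{\log(1/\eta)}$ factor into the $\delta$-loss. The only cosmetic difference is that you add an explicit single-point term $\E[|G_{t_0}|]$ to Dudley's bound, which is harmless since it is itself $O(\Vert\xX-\xX'\Vert_\infty^{\alpha_1/2})$.
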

Despite its reliance on standard results (namely Dudley's theorem), the full proof of Proposition~\ref{prop:Sup_norm_dif_holder} requires precision, to ensure that the provided bounds are sharp. As such, we decided to give the main idea here, but to refer the reader to proofs in the Appendix~\ref{app:fullproofs} for full derivations.

\begin{proof}[Main elements for proving Proposition~\ref{prop:Sup_norm_dif_holder}]
	\label{sketchproof:Sup_norm_dif_holder}
	For any $(\xX, \xX') \in D^2$, the process $\left(\proc{Z}{\xX, t}{} - \proc{Z}{\xX', t}{} \right)_{t\in \xI}$, is a Gaussian Process whose covariance kernel can be expressed as linear combination of $k$ (it is a particular case of Eq.~\ref{eq:k_can_and_k}). As such, the canonical distance associated to it (here denoted $\dxx^2$) inherits its regularity from Condition~\ref{con:suff_k}, which yields:
		\begin{gather}
		\label{eq:Holder_d_kxx}
			 \dxx^2(t, t') \leq 3C \Vert \xX - \xX' \Vert^{\alpha_1}_\infty  \quad \forall (t, t') \in \xI^2\\
			 \dxx^2(t, t') \leq 4C \vert t - t' \vert^{\alpha_2} \quad \forall (t, t') \in \xI^2
		\end{gather}
	combining these bounds with Dudley's theorem and careful numerical development yields the required result.
 \end{proof}

This bound on the expected value of the increments of a GP allows us to make a much stronger statement than the one in  Proposition~A 
of the supplementary material \citep{gautier_supplementary_2023}.

\begin{corollary}
	\label{cor:ASholder}
	If $k$ satisfies Condition \ref{con:suff_k} and  $Z \sim \mathcal{GP}(0, k)$ is measurable and separable, then the process $\left(\proc{Z}{\xX, t}{} - \proc{Z}{\xX', t}{} \right)_{t\in \xI}$ is almost surely $\beta$-Hölder continuous for any $\beta < \frac{\alpha_1}{2}$
\end{corollary}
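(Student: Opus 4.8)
I read the statement as asserting Hölder continuity in the \emph{spatial} variable $\xX$: almost surely, the map $\xX \mapsto \proc{Z}{\xX,\cdot}{}$ is $\beta$-Hölder from $D$ into $(C(\xI), \Vert\cdot\Vert_\infty)$, i.e. there is a random constant $L$ with $\Vert \proc{Z}{\xX,\cdot}{} - \proc{Z}{\xX',\cdot}{}\Vert_\infty \leq L\,\Vert \xX - \xX'\Vert_\infty^\beta$ for all $\xX,\xX'$. (This matches the exponent $\alpha_1/2$, which governs the $\xX$-regularity in Proposition~\ref{prop:Sup_norm_dif_holder}, and feeds directly into Theorem~\ref{th:as_SLGP}.) Since Condition~\ref{con:suff_k} already guarantees an a.s.\ continuous version of $Z$, the quantity $S := \Vert \proc{Z}{\xX,\cdot}{} - \proc{Z}{\xX',\cdot}{}\Vert_\infty$ is a well-defined random variable and $X_{\xX} := \proc{Z}{\xX,\cdot}{}$ takes values in $C(\xI)$. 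The plan is to regard $\xX \mapsto X_{\xX}$ as a $C(\xI)$-valued random field on the $\dimD$-dimensional set $D$ and invoke a Banach-valued Kolmogorov--Chentsov continuity theorem. The sole input such a theorem requires is a moment bound $\E[\Vert X_{\xX} - X_{\xX'}\Vert_\infty^p] \leq C\,\Vert \xX - \xX'\Vert_\infty^{\dimD + \epsilon}$ for some $p$ and some $\epsilon>0$; producing it for all $p$ is the crux, as Proposition~\ref{prop:Sup_norm_dif_holder} only controls the first moment.

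To pass from the first moment to arbitrary $L^p$ moments I would use Gaussian concentration of the supremum. For fixed $\xX,\xX'$ the increment $t \mapsto \proc{Z}{\xX,t}{} - \proc{Z}{\xX',t}{}$ is a centred Gaussian process, so $S$ is (the supremum of the absolute value of) a centred Gaussian process. By the Borell--TIS inequality, $S - \E[S]$ is sub-Gaussian with variance proxy $\sigma^2 := \sup_{t\in\xI} \Var(\proc{Z}{\xX,t}{} - \proc{Z}{\xX',t}{})$, which yields $\E[S^p]^{1/p} \leq \E[S] + c\sqrt{p}\,\sigma$. The variance is precisely $d^2_k([\xX,t],[\xX',t])$, which Condition~\ref{con:suff_k} (applied with $t=t'$) bounds by $C\,\Vert \xX-\xX'\Vert_\infty^{\alpha_1}$, so $\sigma \leq \sqrt{C}\,\Vert \xX - \xX'\Vert_\infty^{\alpha_1/2}$. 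Combining this with the bound $\E[S] = M(\xX,\xX') \leq K_\delta \Vert \xX - \xX'\Vert_\infty^{\alpha_1/2-\delta}$ of Proposition~\ref{prop:Sup_norm_dif_holder}, and using that $D$ is compact to absorb the residual factor $\Vert \xX - \xX'\Vert_\infty^{\delta}$ into a constant, gives, for every $p\geq 1$ and every $0<\delta<\alpha_1/2$, a constant $C_{p,\delta}$ with $\E[S^p] \leq C_{p,\delta}\,\Vert \xX - \xX'\Vert_\infty^{p(\alpha_1/2-\delta)}$.

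With this estimate, Kolmogorov--Chentsov produces a modification of $X$ that is $\beta$-Hölder on $D$ for every $\beta < (\alpha_1/2 - \delta) - \dimD/p$; sending $p \to \infty$ and $\delta \to 0$ covers every $\beta < \alpha_1/2$. Finally I would use the separability of $Z$ (standing assumption whenever a.s.\ continuity is discussed) to identify this modification with $X$ itself, so that the increment field is almost surely---not merely in some version---$\beta$-Hölder continuous in $\xX$, uniformly over $t$. The main obstacle is the moment step: one must verify that the Borell--TIS variance proxy $\sigma$ is of the same order $\Vert \xX - \xX'\Vert_\infty^{\alpha_1/2}$ as the first moment, so that concentration does not degrade the exponent, and that the losses $\delta$ and $\dimD/p$ can both be driven to zero to reach the sharp threshold $\alpha_1/2$.
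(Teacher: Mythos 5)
Your proof is correct, and your reading of the (somewhat ambiguously phrased) statement --- Hölder continuity in $\xX$ of the map $\xX\mapsto \proc{Z}{\xX,\cdot}{}$ into the sup-normed space of bounded functions on $\xI$ --- is exactly the paper's. The skeleton also coincides: both arguments take the first-moment bound $\E\bigl[\Vert \proc{Z}{\xX,\cdot}{}-\proc{Z}{\xX',\cdot}{}\Vert_\infty\bigr]\le K_\delta\Vert\xX-\xX'\Vert_\infty^{\alpha_1/2-\delta}$ of Proposition~\ref{prop:Sup_norm_dif_holder} as the sole quantitative input and then upgrade it to almost-sure Hölder regularity. Where you differ is in the upgrading step. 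The paper simply cites Proposition~\ref{prop:Almostsurelybanach}, the Kolmogorov-type continuity criterion for Banach-valued Gaussian collections, which hides the moment boosting inside Fernique's theorem (all $L^p$ norms of a Banach-valued Gaussian variable are comparable to its first moment, so no $\dimD/p$ loss survives the limit $p\to\infty$). You re-derive that black box by hand: Borell--TIS gives $\E[S^p]^{1/p}\le\E[S]+c\sqrt p\,\sigma$ with $\sigma^2=\sup_{t\in\xI} d^2_{k}([\xX,t],[\xX',t])\le C\Vert\xX-\xX'\Vert_\infty^{\alpha_1}$, which is indeed of the same order $\Vert\xX-\xX'\Vert_\infty^{\alpha_1/2}$ as the first moment, and then the classical Banach-valued Kolmogorov--Chentsov theorem together with the limits $p\to\infty$ and $\delta\to 0$ recovers every $\beta<\alpha_1/2$. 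The two moment-equivalence tools (Fernique versus Borell--TIS) are interchangeable here; your version is more self-contained and makes explicit why the exponent does not degrade, at the price of having to check almost-sure boundedness of the increment process (supplied by Condition~\ref{con:suff_k} via Proposition~\ref{prop:Boundedness_W}, as you note) and to identify the Hölder modification with the original process, which you and the paper both dispatch with the same separability-plus-continuity argument. I see no gap.
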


\begin{proof}
Let $\mathfrak{B}$ be the Banach space of bounded real-valued functions on $D\times \xI$ equipped with the sup-norm.
Moreover, if $Z$ is $\mathfrak{B}$-valued, we just need to combine the bound provided by Proposition \ref{prop:Sup_norm_dif_holder} with Proposition~A 
of the supplementary material \citep{gautier_supplementary_2023}. This induces the existence of a version $\Tilde{Z}$ almost surely $\beta$-Hölder continuous. Then, $D\times \xI$ being compact it follows that $Z$ and $\Tilde{Z}$ are indistinguishable.
    
    If $Z$ is almost surely, but not surely in $\mathfrak{B}$, 
    it is easy to create $Z'$ which is indistinguishable from $Z$ and $\mathfrak{B}$-valued. Applying the reasoning above to $Z'$ yields that $Z'$ (and therefore $Z$) is almost surely $\beta$-Hölder continuous.
\end{proof}

 From thereon, we will always work with assumptions ensuring the a.s. continuity of the GPs we work with. We will consider that our GPs of interest are also $\mathfrak{B}$-valued. Indeed, 
 given an a.s. continuous GP $Z$ it is always possible to construct a \emph{surely} continuous GP $\Tilde{Z}$ (and therefore $\mathfrak{B}$-valued) indistinguishable from it.
 
\begin{theorem}
\label{th:as_SLGP}
Let us consider a centred GP $Z=\proc{Z}{\xX, t}{(\xX, t) \in D \times \xI}$ whose covariance kernel $k$ satisfies Condition~\ref{con:suff_k}.
The SLGP induced by $Z$, denoted by $Y=\proc{Y}{\xX, t}{(\xX, t) \in D \times \xI}$ is in $\indpospdf(\xI; D)$ and is almost surely $\beta$-Hölder continuous with respect to $\Vert \cdot \Vert_\infty$, for any $\beta < \frac{\alpha_1}{2}$. 
\end{theorem}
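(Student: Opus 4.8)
The plan is to treat the two assertions in turn. For membership in $\indpospdf(D; \xI)$, I would use that a kernel satisfying Condition~\ref{con:suff_k} yields a GP admitting an a.s. continuous version (the one we work with), which on the compact $D \times \xI$ is a.s. bounded. On the probability-one event where $Z$ is continuous, each fiber $t \mapsto e^{\proc{Z}{\xX, t}{}}$ is continuous and strictly positive on $\xI$, so $\int_\xI e^{\proc{Z}{\xX, u}{}}\,d\lambda(u) \in (0, \infty)$ (finite by continuity on a compact, positive since $\lambda(\xI)>0$); hence $\proc{Y}{\xX, t}{} = \slogt[Z](\xX, t)$ is well defined, strictly positive, continuous in $t$, and integrates to one. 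By Proposition~\ref{prop:SLGP_cty_case} it is the continuous representer of the Radon--Nikodym derivative, and since the continuity event does not depend on $\xX$ this holds for all $\xX \in D$ simultaneously, giving $Y \in \indpospdf(D; \xI)$ almost surely.

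The core of the Hölder statement is a local Lipschitz bound for the transformation $\slogt$ in sup norm. For bounded continuous $g, h$ on $\xI$ I would interpolate via $g_s := (1-s)h + sg$, $s \in [0,1]$, and differentiate $\phi_s := \slogt[g_s]$, obtaining $\frac{d}{ds}\phi_s(t) = \phi_s(t)\big((g-h)(t) - \int_\xI (g-h)\,\phi_s\,d\lambda\big)$. Since $\phi_s$ is a probability density, the bracket is bounded by $2\Vert g - h\Vert_\infty$, and if $\Vert g\Vert_\infty, \Vert h\Vert_\infty \le R$ then $\Vert \phi_s\Vert_\infty \le e^{2R}/\lambda(\xI)$. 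Integrating over $s \in [0,1]$ yields $\Vert \slogt[g] - \slogt[h]\Vert_\infty \le \frac{2e^{2R}}{\lambda(\xI)}\Vert g - h\Vert_\infty$.

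To conclude I would combine these estimates. Almost surely $R := \sup_{D \times \xI}|Z| < \infty$, so that $\Vert \proc{Z}{\xX, \cdot}{}\Vert_\infty \le R$ uniformly in $\xX$; and by Corollary~\ref{cor:ASholder}, for any $\beta < \alpha_1/2$ there is an a.s. finite $L$ with $\Vert \proc{Z}{\xX, \cdot}{} - \proc{Z}{\xX', \cdot}{}\Vert_\infty \le L\Vert \xX - \xX'\Vert_\infty^\beta$. Applying the Lipschitz bound with $g = \proc{Z}{\xX, \cdot}{}$ and $h = \proc{Z}{\xX', \cdot}{}$ gives $\Vert \proc{Y}{\xX, \cdot}{} - \proc{Y}{\xX', \cdot}{}\Vert_\infty \le \frac{2e^{2R}}{\lambda(\xI)}\,L\,\Vert \xX - \xX'\Vert_\infty^\beta$ with an a.s. finite random constant, which is exactly $\beta$-Hölder continuity in sup norm of the field $\xX \mapsto \proc{Y}{\xX, \cdot}{}$.

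I expect the Lipschitz estimate to be the main obstacle: $\slogt$ is only locally, not globally, Lipschitz in sup norm, so the constant $e^{2R}/\lambda(\xI)$ inevitably depends on the random sup norm $R$ of $Z$. This is precisely why the a.s. boundedness of $Z$ --- guaranteed by continuity on the compact $D \times \xI$ --- is indispensable, and why the resulting Hölder constant is random yet a.s. finite. Everything else is bookkeeping once Corollary~\ref{cor:ASholder} supplies the spatial regularity of the underlying increment field.
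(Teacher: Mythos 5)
Your proof is correct and follows essentially the same route as the paper's: almost sure continuity (hence boundedness and positivity) of $Z$ on the compact $D\times\xI$ gives the pdf-field property, and the Hölder claim reduces to a local Lipschitz estimate $\Vert \slogt[g]-\slogt[h]\Vert_\infty \le \tfrac{2e^{2R}}{\lambda(\xI)}\Vert g-h\Vert_\infty$ combined with Corollary~\ref{cor:ASholder}. The only difference is cosmetic: you obtain that estimate by interpolating $g_s=(1-s)h+sg$ and differentiating in $s$, which yields a clean constant, whereas the paper sandwiches $Z_{\xX',\cdot}$ between $Z_{\xX,\cdot}\pm\Vert Z_{\xX,\cdot}-Z_{\xX',\cdot}\Vert_\infty$ and uses convexity of the exponential, ending with a $2h+O(h^2)$ bound — both arguments deliver the same conclusion.
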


\begin{proof}
	First, note that under Condition~\ref{con:suff_k}, $Z$ is almost surely continuous (and hence a.s. bounded). It follows from it that $Y$ is almost surely in $\indpospdf(\xI; D)$. 
Now observe that for any $t \in \xI$, we have:
\begin{equation}
		\proc{Z}{\xX, t}{} - \Vert \proc{Z}{\xX, \cdot}{} -  \proc{Z}{\xX', \cdot}{}\Vert_\infty \leq \proc{Z}{\xX', t}{} \leq \proc{Z}{\xX, t}{} +\Vert \proc{Z}{\xX, \cdot}{} -  \proc{Z}{\xX', \cdot}{}\Vert_\infty
\end{equation}
 with $\Vert \proc{Z}{\xX, \cdot}{} -  \proc{Z}{\xX', \cdot}{}\Vert_\infty$ being possibly infinite on the $P$-null-set where $Z$ is not continuous. As such, for any $t \in \xI$:
	\begin{equation}
		\left\vert \dfrac{e^{\proc{Z}{\xX', t}{}}}{\int_\xI e^{ \proc{Z}{\xX', u}{} } \,d\lambda(u) } - 
		\dfrac{e^{\proc{Z}{\xX, t}{}}}{\int_\xI e^{\proc{Z}{\xX, u}{}} \,d\lambda(u)} \right\vert \leq \dfrac{e^{\proc{Z}{\xX, t}{}}}{\int_\xI e^{\proc{Z}{\xX, u}{}} \,d\lambda(u)} \left[ e^{ 2 \Vert \proc{Z}{\xX, \cdot}{} -  \proc{Z}{\xX', \cdot}{}\Vert_\infty} -1 \right]
	\end{equation}
	
	By convexity of the exponential, we find that:
	
	\begin{equation}
		\left\Vert \proc{Y}{\xX, \cdot}{} - \proc{Y}{\xX', \cdot}{} \right\Vert_\infty 
		 \leq \dfrac{e^{2 \Vert Z \Vert_\infty}}{\lambda(\xI)} \left[ 2 \Vert \proc{Z}{\xX, \cdot}{} -  \proc{Z}{\xX', \cdot}{}\Vert_\infty + O(\Vert \proc{Z}{\xX, \cdot}{} -  \proc{Z}{\xX', \cdot}{}\Vert_\infty^2) \right]
	\end{equation}
	Combining the almost-sure boundedness of $Z$ with Corollary~\ref{cor:ASholder} and the domain's compactness concludes the proof.
\end{proof}

\begin{remark}
For notational simplicity, we focused on the case where $Z$ is a centred GP, but all properties are easily extended to the case where the mean of $Z$ is $\beta$-Hölder continuous for any $\beta < \frac{\alpha_1}{2}$. 
\end{remark}

From Proposition~\ref{prop:Sup_norm_dif_holder}, we are also able to derive an analogue to scalar's mean square continuity, presented in the following section.

\subsection{Mean power continuity of the Spatial Logistic Gaussian Process}
\label{sec:cty:subsec:exp}
 Indeed, we also leverage the bound on the expected value of the sup-norm of our increment field to show that the Hölder conditions on $k$ and $\kincrement$ are sufficient conditions for the mean power continuity of the SLGP.

\begin{theorem}[Sufficient condition for mean power continuity of the SLGP]
	\label{th:Expected_quadratic_cty}
	Consider the SLGP $Y=\proc{Y}{\xX, t}{(\xX, t) \in D \times \xI}$ induced by a centred GP $Z=\proc{Z}{\xX, t}{(\xX, t) \in D \times \xI}$ with covariance kernel $k$. Assume that $k$ satisfies Condition~\ref{con:suff_k}.
	
	Then, for all $\gamma>0$ and $0<\delta < \gamma\alpha_1/2$ (for Equations~\ref{eq:cty_1}-\ref{eq:cty_0}, resp. $0<\delta < \gamma\alpha_1$ for Equations~\ref{eq:cty_2}-\ref{eq:cty_3}), there exists $K_{\gamma, \delta}>0$ such that for all $\xX, \xX' \in D^2$:
	\begin{align}
            \mathbb{E} \left[ d_{H}(\proc{Y}{\xX, \cdot}{}, \proc{Y}{\xX', \cdot}{})^\gamma \right] \leq K_{\gamma, \delta}  \Vert \xX - \xX' \Vert^{\gamma \alpha_1 /2 -\delta}_\infty \label{eq:cty_1}\\
            \mathbb{E} \left[ V(\proc{Y}{\xX, \cdot}{}, \proc{Y}{\xX', \cdot}{})^\gamma \right] \leq K_{\gamma, \delta}  \Vert \xX - \xX' \Vert^{\gamma \alpha_1/2 -\delta}_\infty \label{eq:cty_0}\\
            \mathbb{E} \left[ KL(\proc{Y}{\xX, \cdot}{}, \proc{Y}{\xX', \cdot}{})^\gamma \right] \leq K_{\gamma, \delta}  \Vert \xX - \xX' \Vert^{\gamma \alpha_1 -\delta}_\infty \label{eq:cty_2}   \\
            \mathbb{E} \left[ d_{TV}(\proc{Y}{\xX, \cdot}{}, \proc{Y}{\xX', \cdot}{})^\gamma \right] \leq K_{\gamma, \delta}  \Vert \xX - \xX' \Vert^{\gamma \alpha_1 -\delta}_\infty \label{eq:cty_3} 
        \end{align}
\end{theorem}

 The main addition of this theorem, compared to the Proposition \ref{th:as_SLGP} is that it provides some control on the modulus of continuity. In Theorem~\ref{th:Expected_quadratic_cty}, we add to the almost-sure Hölder continuity by also providing rates on the dissimilarity between SLGPs considered at different $\xX$'s. We give here the sketch of proof and refer the reader to appendix~\ref{proof:Expected_quadratic_cty} for detailed derivations.

\begin{proof}[Main elements for proving Theorem \ref{th:Expected_quadratic_cty}]
The core idea of this proof is to leverage Lemma~\ref{pty:bound_logtransf_sup} and to apply Fernique's theorem. Careful analysis and further derivations enable us to prove that we have the following (tight) upper-bounds:
\begin{equation}
\label{eq:boundsSLGP-GP}
		\begin{array}{c}
			\mathbb{E} \left[ 
			d_{H}(\proc{Y}{\xX, \cdot}{}, \proc{Y}{\xX', \cdot}{})^\gamma 
			\right] \leq \kappa_{\gamma} \mathbb{E} \left[ \Vert \proc{Z}{\xX, \cdot}{} - \proc{Z}{\xX', \cdot}{} \Vert_\infty \right]^{\gamma} \\
   \mathbb{E} \left[ 
			V(\proc{Y}{\xX, \cdot}{}, \proc{Y}{\xX', \cdot}{})^\gamma 
			\right] \leq \kappa_{\gamma} \mathbb{E} \left[ \Vert \proc{Z}{\xX, \cdot}{} - \proc{Z}{\xX', \cdot}{} \Vert_\infty \right]^{\gamma} \\
			\mathbb{E} \left[ 
			KL(\proc{Y}{\xX, \cdot}{}, \proc{Y}{\xX', \cdot}{})^\gamma 
			\right] \leq \kappa_{\gamma} \mathbb{E} \left[ \Vert \proc{Z}{\xX, \cdot}{} - \proc{Z}{\xX', \cdot}{} \Vert_\infty \right]^{2 \gamma} \\
			\mathbb{E} \left[ 
			d_{TV}(\proc{Y}{\xX, \cdot}{}, \proc{Y}{\xX', \cdot}{})^\gamma 
			\right] \leq \kappa_{\gamma} \mathbb{E} \left[ \Vert \proc{Z}{\xX, \cdot}{} - \proc{Z}{\xX', \cdot}{} \Vert_\infty \right]^{2 \gamma} \\
		\end{array}
	\end{equation}
We combine these inequalities with Proposition~\ref{prop:Sup_norm_dif_holder} to conclude the proof.
\end{proof}

\begin{remark}
\label{rmrk:uniform_approximation_GP}
	The proof of this theorem consists in getting to the inequalities in Eq.~\ref{eq:boundsSLGP-GP} and then leveraging Proposition~\ref{prop:Sup_norm_dif_holder}. It is noteworthy that the exact same proof structure can be applied, for instance, to prove that for a SLGP $Y = \slogt [Z]$ and a density field $f$ obtained by spatial logistic density transformation of a function $g$, $f =  \slogt [g]$, if $\mathbb{E} \left[ \Vert \proc{Z}{\xX, \cdot}{} - g(\xX, \cdot) \Vert_\infty^{\gamma} \right]  \rightarrow 0$ then for all $\xX$:
	
	\begin{equation}
		\begin{array}{c}
			\mathbb{E} \left[ 
			d_{H}(f(\xX, \cdot), \proc{Y}{\xX, \cdot}{})^\gamma 
			\right] \rightarrow 0 \\
			\mathbb{E} \left[ 
			V(f(\xX, \cdot), \proc{Y}{\xX, \cdot}{})^\gamma 
			\right] \rightarrow 0 \\
			\mathbb{E} \left[ 
			KL(f(\xX, \cdot), \proc{Y}{\xX, \cdot}{})^\gamma 
			\right] \rightarrow 0\\
			\mathbb{E} \left[ 
			d_{TV}(f(\xX, \cdot), \proc{Y}{\xX, \cdot}{})^\gamma 
			\right] \rightarrow 0\\
		\end{array}
	\end{equation}
	Hence making these bounds applicable in the context of uniform approximation by a (SL)GP. 
\end{remark}

\section{Applications in density field estimation}

\label{sec:app}

In addition to studying the mathematical properties of the class of models at hand, we propose an implementation for density field estimation which we summarize in section~\ref{sec:app:subsec:implementation}.

In section~\ref{sec:app:subsec:an:subsubsec:reg}, we compare empirical versus theoretical rates provided by Theorem~\ref{th:Expected_quadratic_cty}. To do so, we use unconditional realisations of SLGPs obtained by transforming centered GPs with kernels of various Hölder exponents. Then, in section~\ref{sec:app:subsec:an:subsubsec:impact}, we illustrate the SLGPs potential in estimating the true data generative process on analytical test-cases. Finally, we showcase the potential of our model by applying it to a real-world dataset of temperatures in Switzerland in section~\ref{sec:app:subsec:meteo}.

\subsection{A brief overview of our implementation choices}
\label{sec:app:subsec:implementation}

This section only presents a brief overview of the choices we made in implementing the model. We refer the reader to \cite{gautier_modelling_2023} for more details, as well as supporting elements in section~5 

In all that follows, we consider that our dataset consits in $n$ couples of locations and observations $\{(\xX_i, t_i)\}_{1 \leq i \leq n}\in ( D\times \xI )^n$. We assume the $t_i$'s are realisations of some independent random variables $T_i$, and denote by $f_0(\xX_i, \cdot)$ the (unknown) density of $T_i$. We will denote $\mathbf{T}_n := (T_i)_{1 \leq i \leq n}$ and $\mathbf{t}_n := (t_i)_{1 \leq i \leq n}$ the corresponding vectors.

For a suitable kernel $k$ (for instance $k$ satisfying Condition~\ref{con:suff_k}), we consider the Gaussian Process $Z=\proc{Z}{\xX, t}{(\xX, t) \in D\times \xI} \sim \mathcal{GP}(0, k)$ assumed to be exponentially integrable. Let us denote $Y=\proc{Y}{\xX, t}{(\xX, t) \in D\times \xI}$ the SLGP induced by $Z$. Assuming that the observations are drawn from $Y$ yields the likelihood:

\begin{equation}
    \mathcal{L}( \mathbf{t}_n \vert Y) = \prod\limits_{i=1}^n \proc{Y}{\xX_i, t_i}{}
\end{equation}
It is also possible to work with $Z$ directly:
\begin{equation}
	\label{eq:like_data}
	\mathcal{L}( \mathbf{t}_n \vert Z) =  \prod\limits_{i=1}^n \dfrac{e^{\proc{Z}{\xX_i, t_i}{}}}{\int_\xI e^{\proc{Z}{\xX_i, u}{}} \,du } 
\end{equation}
This latter formulation is easier to work with than the prime, but implementation of this density field estimation still causes two main issues. The first one being that the integral in Equation~\ref{eq:like_data} involves values of $Z$ over the whole domain. This infinite dimensional object makes likelihood-based computations cumbersome. 
The other issue lies on the fact that in all realistic cases, GP depends on some unknown hyperparameters that need to be estimated.

\paragraph{Handling the dimensionality of the problem:} 
A simple approach to this problem would be to consider a SLGP discretized both in time and space, by first specifying a fine grid over which we want to work. However, this approach does not scale well in dimension and requires knowing where one wants to predict before performing the estimation. Instead, we propose a parametrisation in frequency, that consists in considering finite rank Gaussian Processes, i.e. processes that can be written as:
\begin{equation}
	\label{eq:GPextension}
	\proc{Z}{\xX, t}{} = \sigma \sum_{j=1}^p f_j(\xX, t) \varepsilon_j , \ \forall \xX \in D, \ t\in \xI
\end{equation}
where $p \in \mathbb N$, the $f_i$ are functions in $L^2(D\times \xI)$, $\sigma >0$ is the standard deviation of the process, and the $\varepsilon_i$'s are i.i.d. $\mathcal{N}(0, 1)$. 

Also note that we use the notation $\varepsilon$ and variations thereof for the random variables, and $\epsilon$ for instantiated values.

Under this parametrisation, we have a finite-dimensional posterior:
\begin{equation}
	\label{eq:int_post_finite}
	\pi( \boldsymbol \epsilon \vert \mathbf{T}_n = \mathbf{t}_n) \propto \phi( \boldsymbol \epsilon) \prod\limits_{i=1}^n
	\dfrac{e^{\sigma \sum_{j=1}^p f_j(\xX_i, t_i) \varepsilon_j}}{\int_\xI e^{\sigma \sum_{j=1}^p f_j(\xX_i, u) \epsilon_j} \,du } 
\end{equation}
Where $\phi$ denotes the multivariate density of the standard normal distribution.

We implement the Maximum A Posterior (MAP) estimation, as well as MCMC estimation. The later delivers a probabilistic prediction of the considered density fields, and allows us to approximately sample from the posterior distribution on $Z$. This generative model can be leveraged to quantify uncertainty on the obtained predictions.

\paragraph{Handling the hyper-parameters:} Assuming that the GP is parametrised by a variance parameter $\sigma^2$ as well as $\dimD+1$ length-scale parameters $\rho_t, \rho_1, ..., \rho_{\dimD}$ through: 
\begin{equation}
	\label{eq:GPextensionPara}
	\proc{Z}{\xX, t}{} = \sigma \sum_{j=1}^p f_j(x_1 / \rho_1, ..., x_{\dimD} / \rho_{\dimD}, t/\rho_t) \varepsilon_j , \ \forall \xX \in D, \ t\in \xI
\end{equation}
we use an heuristic for $\sigma^2$ and a grid search for the $\rho$'s.

\textbf{Variance heuristic: }This part consists in selecting (through numerical simulations) a variance that ensures numerical stability of the SLGP. In our case, we typically select $\sigma^2$ such that $\mathbb{E}[\max_{\xX \in D} \vert \max_{t \in \xI} \proc{Z}{\xX, t}{} - \min_{t \in \xI} \proc{Z}{\xX, t}{}\vert ] \leq 5$. This heuristic controls the range of values that the SLGP can take, typically restricting it to $[0, e^5\approx 148]$. This step is not performed conditionally on any data, and is here to ensure that we specified a model that does not present numeric overflow. Motivating examples underlying this heuristic are available in a vignette \citep{gautier_vignette_2024_varheuri}.

\textbf{Length-scale selection: }As for the length-scales, we use a Bayesian approach where we specify a prior $\pi$ over $\boldsymbol \rho$, and maximise the joint posterior:

\begin{equation}
	\label{eq:int_post_joint_finite}
 \hspace*{-8pt} \pi( \boldsymbol \epsilon; \boldsymbol \rho \vert \mathbf{T}_n = \mathbf{t}_n) \propto \pi(\boldsymbol \rho) \phi( \boldsymbol \epsilon) \prod\limits_{i=1}^n
	\dfrac{e^{\sigma \sum_{j=1}^p f_j(x_{1, i} / \rho_1, ..., x_{\dimD, i} / \rho_{\dimD}, t_i/\rho_t) \epsilon_j }}{\int_\xI e^{\sigma \sum_{j=1}^p f_j(x_{1, i} / \rho_1, ..., x_{\dimD, i} / \rho_{\dimD}, u/\rho_t) \epsilon_j} \,du } 
\end{equation}
Then, we use the length-scales realising the MAP in any subsequent MCMC estimation.

\paragraph{Summary of the implementation}

In our implementation, we used finite rank Gaussian Processes defined through the Random Fourier Features (RFF)  approach, where the core idea is to sample frequencies that allow to approximate a kernel and use a basis functions made of sines and cosines with these frequencies \citep{rahimi_random_2008, rahimi_weighted_2009}. 
We also focus our implementation on Matérn-type kernels, a family of kernels that are well-suited for RFF due to their broad expressiveness and the closed form formula for their spectral density. 
For further detail on these topics, we refer the reader to the supplementary material and references therein: section~5.2
is a primer on RFF, and section~5.3
gives more details on the Matérn kernels.

\emph{Main steps of our implementation:}
\begin{itemize}
	\item \textbf{Model specification:} Select a stationary kernel from the Matérn family, a  number $p \in \mathbb{N}$ of frequencies to sample, and a prior distribution for the length-scale. In light of results from \cite{van_der_vaart_adaptive_2009}, we opt for an inverse Gamma prior.
 
        \item \textbf{Defining the basis function:} Draw $p$ i.i.d. samples from $k$'s spectral density $s(\cdot)$, denoted $\boldsymbol \xi_i \in \xR^{\dimD + 1}$. For $1\leq i\leq p$, define $f_i(\xX, t) = \cos(2\pi \boldsymbol \xi_i^\top[\xX, t])$, and $f_{i+p}(\xX, t) = \sin(2\pi \boldsymbol \xi_{i-p/2}^\top [\xX, t])$.
        
        \item \textbf{Determine the variance} Use the heuristic rule above to determine a value $\sigma^2_{stable}$ of the variance that ensures numerical stability. This step is not data-dependant.
        
        \item \textbf{Perform MAP estimation while estimating the length-scales} Find the $\boldsymbol \epsilon^*$ and $\boldsymbol \rho^*$ that maximise the posterior in Equation~\ref{eq:int_post_joint_finite}.
        If only a point estimate of the density field was required, the estimation is done and the predictor associated is:
        \begin{equation}
            \proc{\hat Y}{\xX, t}{}^{\text{MAP}} = \dfrac{e^{\sigma_{stable} \sum_{j=1}^p f_j(x_{1} / \rho_1^*, ..., x_{\dimD} / \rho_{\dimD}^*, t/\rho_t^*) \epsilon_j^* }}{\int_\xI e^{\sigma_{stable} \sum_{j=1}^p f_j(x_{1} / \rho_1^*, ..., x_{\dimD} / \rho_{\dimD}^*, u/\rho_t^*) \epsilon_j^*} \,du }
        \end{equation}
        \item \textbf{(If necessary) Perform MCMC:}
	For the MCMC, due to heavier computational load we recommend keeping the hyper-parameters set to their previously determined values $\sigma_{stable}^2$ and $\boldsymbol \rho^*$, and to only perform the inference over $\boldsymbol \varepsilon$. Our current implementation uses the No-U-Turn Sampler algorithm implemented in the RStan package \citep{stan_development_team_rstan_2024}. This MCMC step returns $N$ draws $(\boldsymbol \epsilon_i)_{1\leq i \leq N}$ from the posterior of $\boldsymbol \varepsilon$, which in turns yields a probabilistic prediction of the density field, since we have $N$ draws of the SLGP:
 \begin{equation}
            \proc{\hat Y}{\xX, t}{}^{\text{MCMC}, i} = \dfrac{e^{\sigma_{stable} \sum_{j=1}^p f_j(x_{1} / \rho_1^*, ..., x_{\dimD} / \rho_{\dimD}^*, t/\rho_t^*) \epsilon_{i, j} }}{\int_\xI e^{\sigma_{stable} \sum_{j=1}^p f_j(x_{1} / \rho_1^*, ..., x_{\dimD} / \rho_{\dimD}^*, u/\rho_t^*) \epsilon_{i, j}} \,du }
        \end{equation}	
\end{itemize}
This implementation allows us to perform density field estimation, as we are about to demonstrate in the coming section.

\subsection{Synthetic test cases: how regularity influences learning.}
\label{sec:app:subsec:an}

\subsubsection{Some kernels and the resulting spatial regularity}
\label{sec:app:subsec:an:subsubsec:reg}

We consider two commonly used covariance kernels, and visualize how their continuity modulus affects their expected power continuity. We focus on the setting where $D=\xI=[0, 1]$. For $y=[x, t], y'=[x', t'] \in \xR^2$ we consider:
\begin{itemize}
    \item The exponential kernel: $\displaystyle k( y, y') :=e^{-\Vert y-y'\Vert_2}$, Hölder exponent $ \alpha_{1}= 1$.
    \item The Gaussian kernel: $k(y, y') :=e^{-\Vert y-y'\Vert_2^{2} /2}$, Hölder exponent $\alpha_{1} = 2$.
\end{itemize}
Other commonly used kernels such as Matérn 3/2 or 5/2 are not displayed here since they share the Hölder exponent of the Gaussian kernel.
	
By drawing 1000 unconditional realisations of SLGPs induced by centred GPs with the corresponding kernels, we can represent a re-scaled version of $\mathbb{E}\left[ \mathfrak{D} \left( \SLGPM{0}{}, \SLGPM{\xX'}{}\right)^\gamma \right]$ for the four dissimilarities $\mathfrak{D}$ considered in this paper and varying $\gamma$. We also represent the corresponding theoretical rate. Re-scaling is used solely to allow all curves to appear on the same plots.

\begin{figure}[H]
    \centering
    \includegraphics[width=0.9\linewidth]{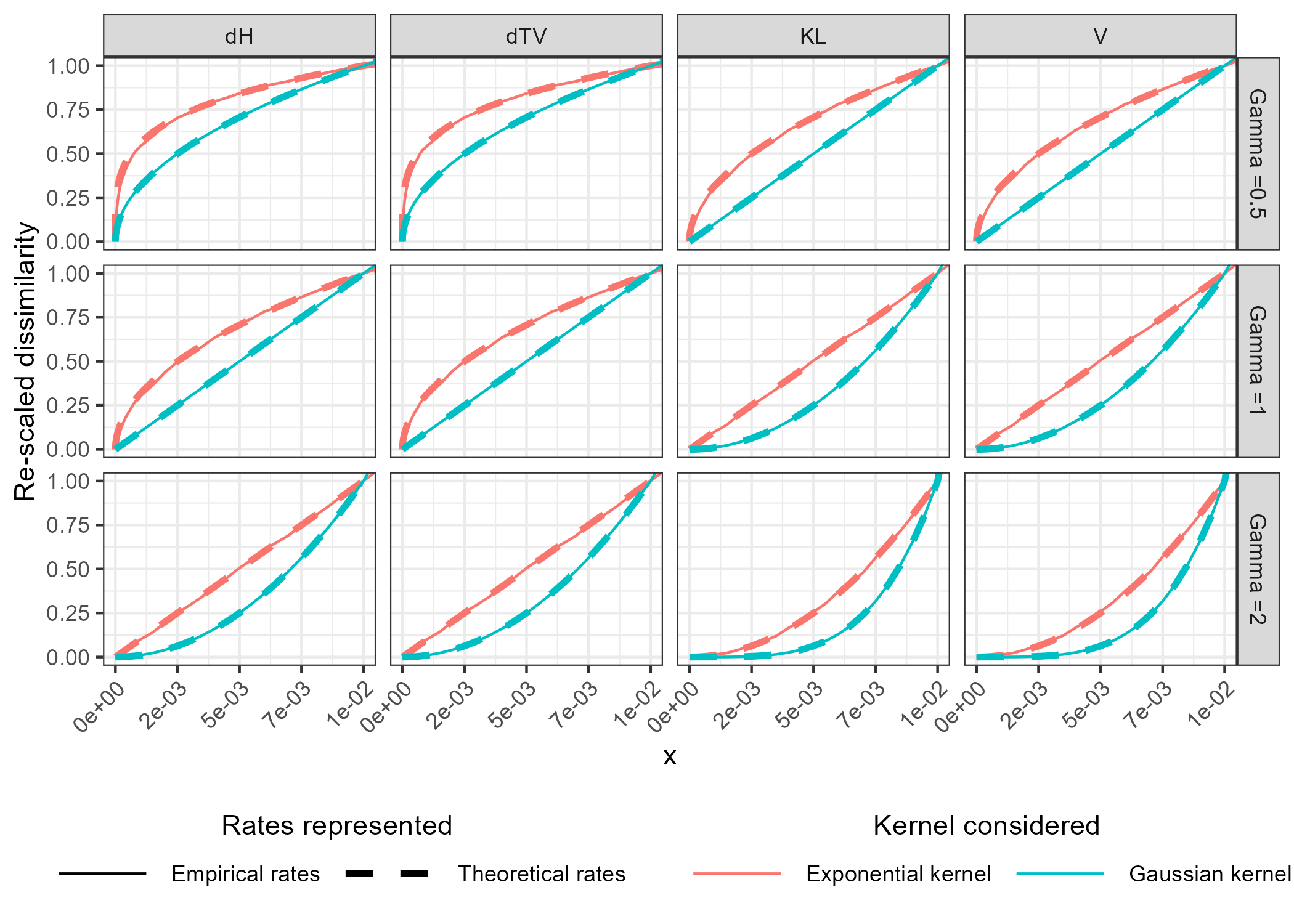}
    \caption{Visualising $\mathbb{E}\left[ \mathfrak{D} \left( \SLGPM{0}{}, \SLGPM{\xX'}{}\right)^\gamma \right]$ (plain lines) and the theoretical bound (dotted lines) for both kernels, all four dissimilarities and $\gamma \in \{0.5, 1, 2\}$.}
    \label{fig:ratesHolder}
\end{figure}

Figure~\ref{fig:ratesHolder} supports our claim that the bounds obtained previously in the paper are tight. For more details on this topic and to access the codes underlying this application, we refer the readers to the code, made available at \cite{gautier_vignette_2024_ctyrates}.

We now continue working on synthetic fields, and check that our SLGP models allow for sample-based distributional learning.

\subsubsection{Impact on learning the density field}
\label{sec:app:subsec:an:subsubsec:impact}
For $D=\xI=[0, 1]$, we consider four density valued-fields, perfectly known, represented in Figure \ref{fig:anfield}. These references fields are realisations of SLGPs with prescribed spatial regularities and known hyper-parameters.

\vspace{-8pt}
\begin{figure}[H]
	\centering
	\includegraphics[width=0.925\linewidth]{./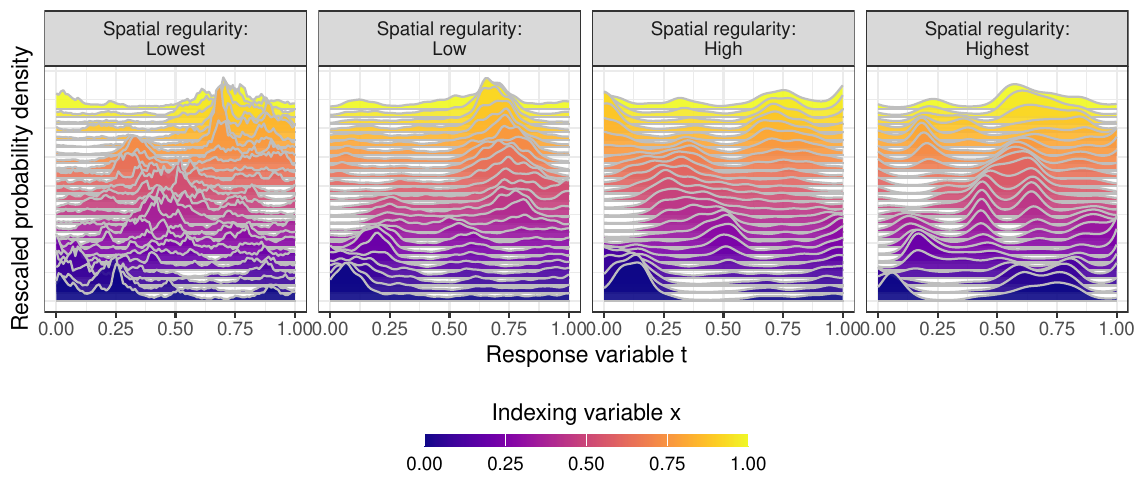}
	\caption{Representation of the four density fields used as reference: probability density functions over slices at some prescribed $\xX \in [0, 1]$.}
	\label{fig:anfield}
\end{figure}\vspace{-6pt}

We run the density field estimation without hyper-parameters estimation (as they are known). Figures displaying Maximum A Posteriori (MAP) estimators are available in Figure~\ref{fig:anfield1:res} for the first reference field and in Section~6
of the supplementary material, Figures~A, B and C
for the other three. 

\vspace{-8pt}
\begin{figure}[H]
	\centering
	\includegraphics[width=0.925\linewidth]{./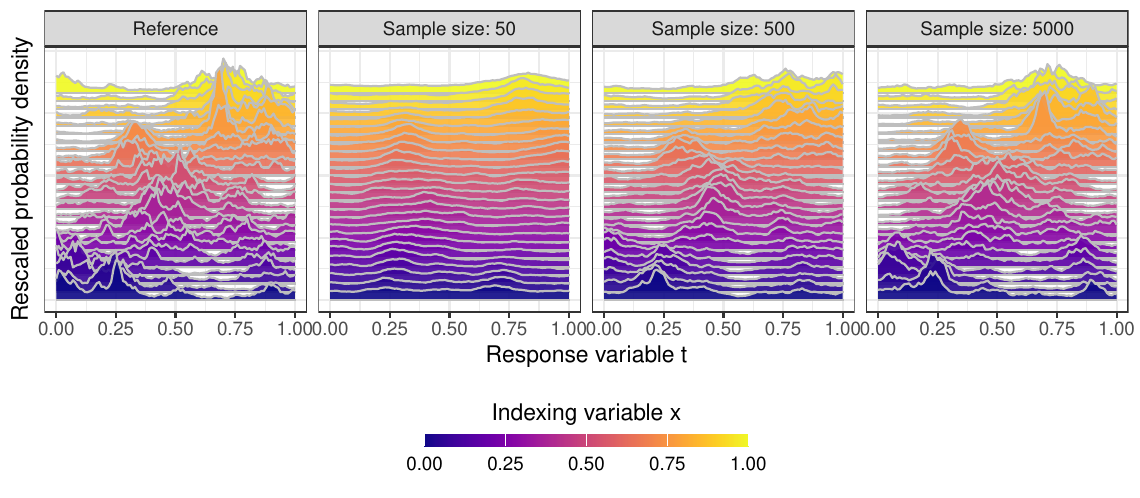}
	\caption{Results for the first reference field (lowest spatial regularity): Showing the MAP estimator for varying sample sizes, with samples scattered across space.}
	\label{fig:anfield1:res}
\end{figure}\vspace{-6pt}

We visually assess that the goodness of fit of our density estimation procedure increases with the number of available observations. Moreover, since we only consider finite rank GP, the number of basis functions used may also determine how precise our estimation can be. In order to quantify
the prediction error for different sample sizes and number of basis functions, we define an Integrated Hellinger distance to measure dissimilarity between two probability density valued fields $f(\xX, \cdot)$ and $f'(\xX, \cdot)$:

\begin{equation}
	d_{IH}^2(f(\xX, \cdot) , f'(\xX, \cdot) ) = \frac{1}{2}\int_D \int_\xI \left( \sqrt{f(\mathbf{v}, u)} - \sqrt{f'(\mathbf{v}, u))}  \right)^2 \,du \,d\mathbf{v}
\end{equation} 
In Fig. \ref{fig:isH}, we display the distribution of $d_{IH}$ between true and estimated fields for various sample sizes and SLGP orders, for SLGPs matching the respective spatial regularities of the reference fields. We see that for small sample sizes, the number of basis functions has little influence, but that it becomes limiting when more observations are available. Indeed, the SLGP models relying on the smallest numbers of basis functions appear to struggle to capture small scale variations.

\begin{figure}[H]
	\centering
	\includegraphics[width=\linewidth]{./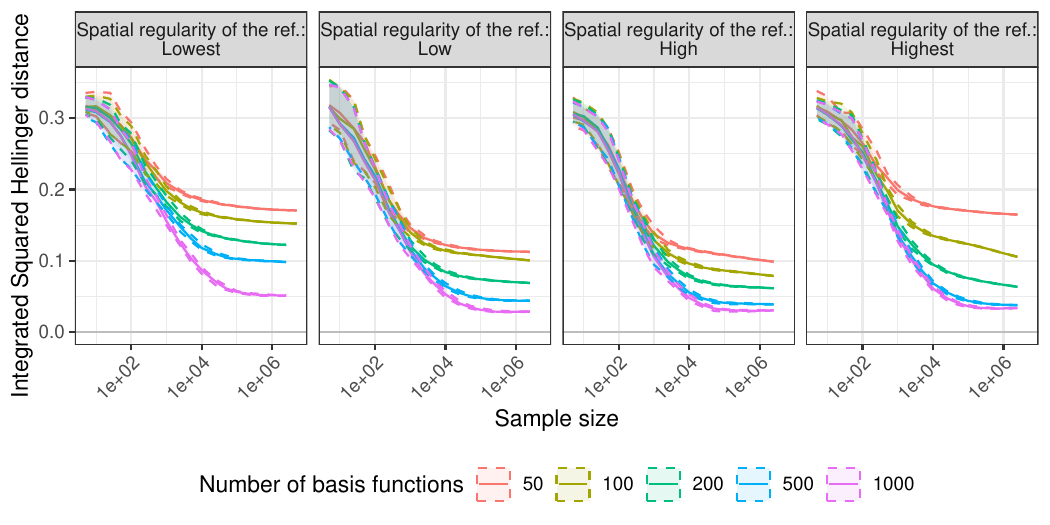}
	\caption{Integrated Hellinger distance distribution for different sample sizes and process orders. SLGP's spatial regularity match that of the corresponding reference field, and the uncertainty is estimated through 100 repetitions of each experiment.}
	\label{fig:isH}
\end{figure}

For more insights on this topic and to access the codes underlying this application, we refer the readers to \cite{gautier_vignette_2024_postconst}. Additionally, we note that the current errors converge to positive values rather than zero. Our investigations suggest that this is an artifact of the sampling strategy employed, which falls outside the scope of this work and will be addressed in future studies.

We now turn our attention to a final application, where SLGP modeling is used for statistical inference on real-world data.

\subsection{Application to a meteorological dataset}
\label{sec:app:subsec:meteo}
This section presents an application using a dataset of daily average temperatures in Switzerland for the year 2019. While not intended as a true forecasting application, this example serves to illustrate the applicability of the SLGP density estimation model on real-world data. The full code for this analysis is available on our website \citep{gautier_vignette_2024_meteo}, providing a practical reference for reproducing and extending the analysis.

\paragraph{Data and problem considered: } The dataset consists of daily temperature averages from 29 stations across Switzerland. These stations, as shown in Figure~\ref{fig:switzerlandstationswithoutbe}, are distributed across the country and capture a range of altitudes, latitudes, and longitudes. An example of the available data is provided in section~6.2
 of the supplementary material \citep{gautier_supplementary_2023}. The SLGP model is fitted to data from all but three of the stations, leaving these out for testing the generalization of the model. Since date-specific trends are not considered, this analysis focuses solely on marginal temperature distributions.

\begin{figure}[H]
	\centering
	\includegraphics[width=0.8\linewidth]{./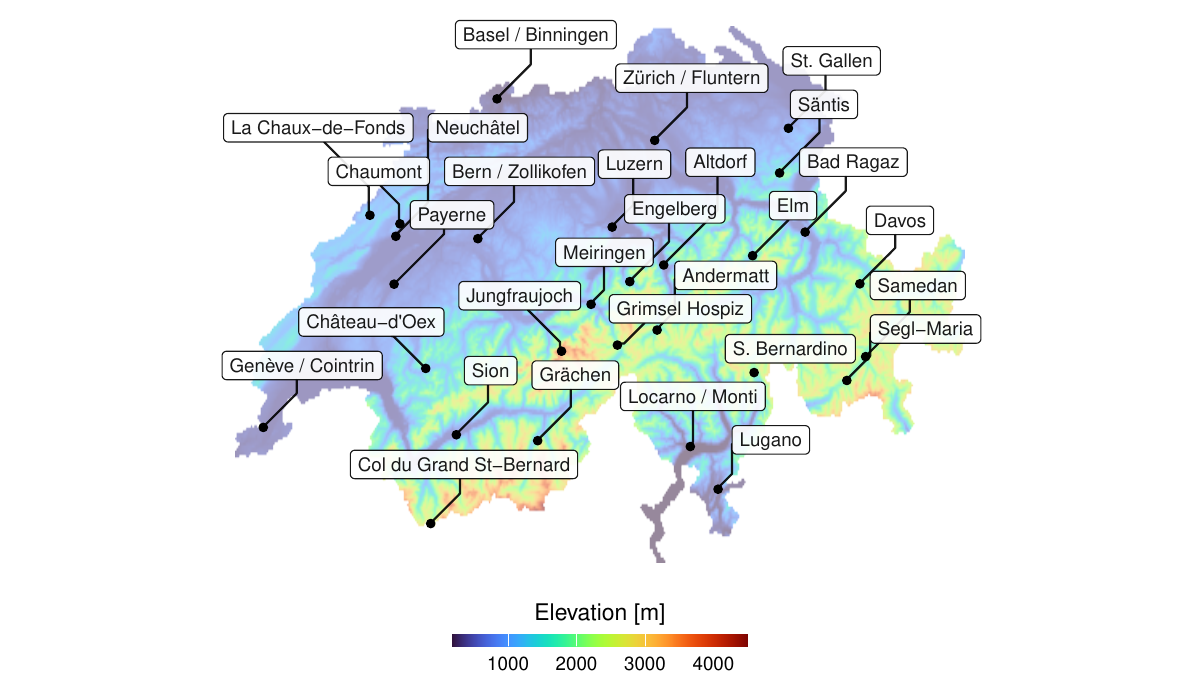}
	\caption{Map of Switzerland showing the 29 Stations present in the data-set.}
	\label{fig:switzerlandstationswithoutbe}
\end{figure}

The temperature data-set is provided by MeteoSwiss \citep{meteorology_climatological_2019} and the topographical data is provided by the Swiss Federal Office of Topography \citep{swisstopo_digital_2019}.

\paragraph{Practicalities of the SLGP modelling: } We applied a finite-rank GP approach using 250 random Fourier features (equivalent to 500 basis functions) based on a multivariate Matérn 5/2 kernel. As described in the previous section, the variance parameter was calibrated independently form any data and ensures numerical stability of the prior. The length-scales were selected through a grid search, where we tested a range of length-scale values and performed MAP estimation for each, ultimately retaining the configuration that maximized the posterior. The profiles of the negative-log-posterior values for the lengthscales can be found in section~6.2
Figure~8
. With this approach, we identify promising values of the lengthscale for latitude and longitude to be at $40\%$ of the range, while it is at $15\%$ for the altitude and $7.5\%$ for the temperature. 

\begin{figure}[H]
	\centering
	\includegraphics[width=0.9\linewidth]{./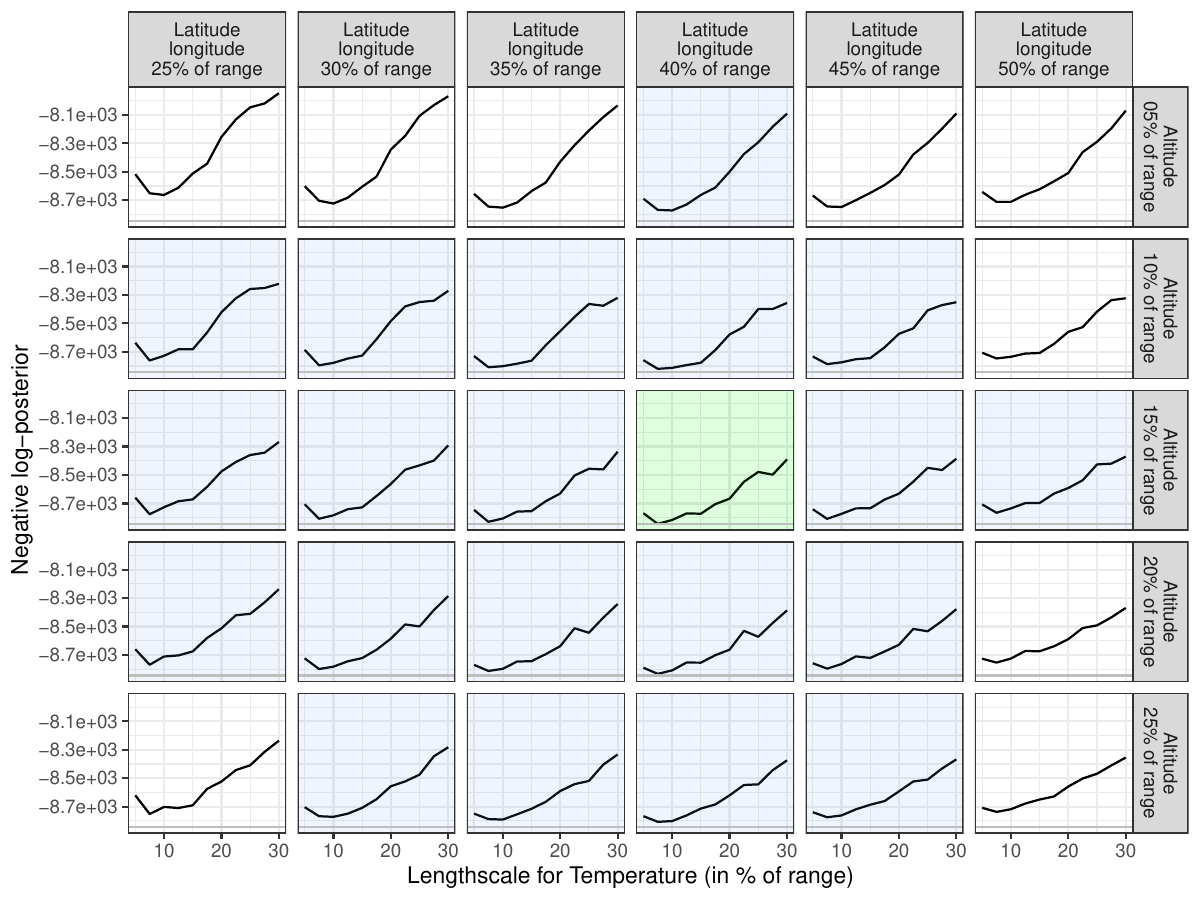}
	\caption{Showing the values of the negative log-posterior when varying the lengthscale parameters for Latitude/Longitude, Altitude and Temperature. The panel achieving the minimum is highlighted in green, the panels whose smallest values are less than 1\% away from the minimum are in light blue, emphasizing the relative flatness of the optimisation problem.}
 \label{fig:negloglikeprofilemeteo}
\end{figure}

First, we present estimation results at some specific stations to facilitate comparison with observed data. However, SLGP modeling offers more comprehensive capabilities, enabling predictions of the density field over the entire region (in this case, all of Switzerland). Results at other stations, not represented here, are available in figures~E, F, G and H
of the supplementary material. 

In  Figure~\ref{fig:meteo_at_stations_with_data}, we display the results of the SLGP estimation for 5 stations in the training dataset. For all the considered stations, the MAP estimate and the MCMC mean follow the histograms quite closely. 

It also appears that the MCMC draws have some variability, and while not all draws are able to capture precisely the locations of the modes, the overall structure is well understood and all the draws put probability mass in the relevant region of space. 

\begin{figure}[H]
	\centering
	\includegraphics[width=0.9\linewidth]{./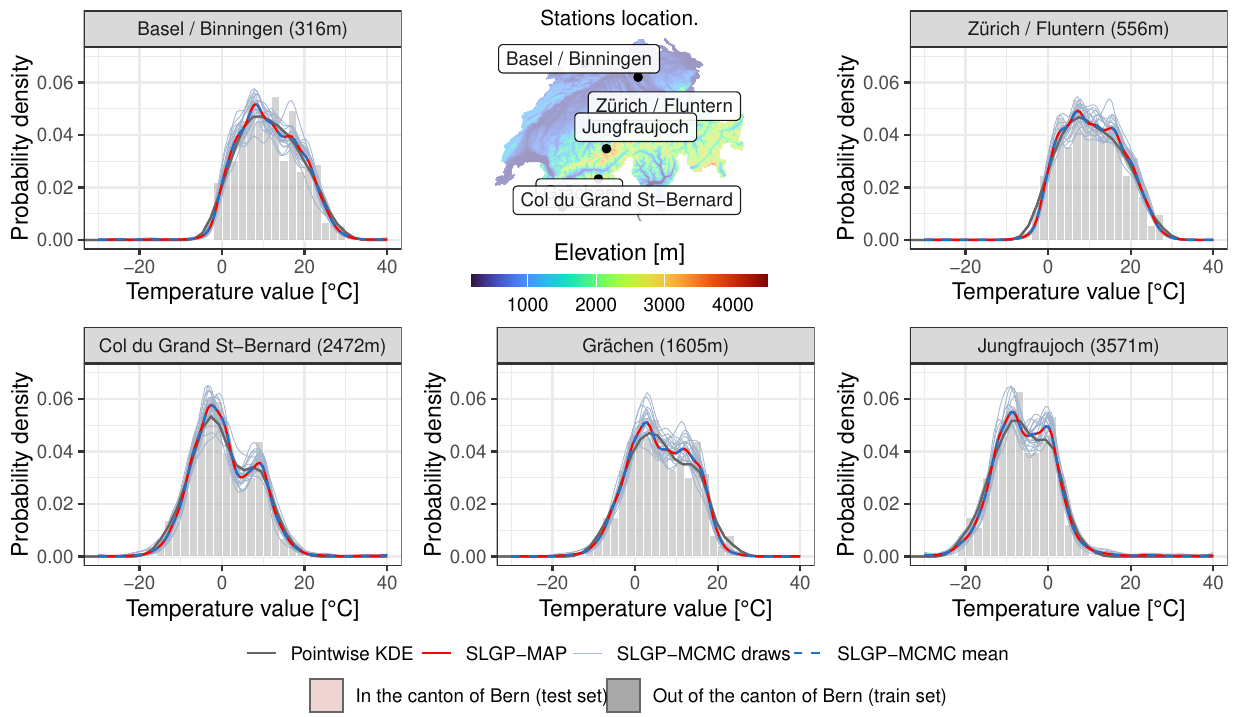}
	\caption{SLGP trained on 26 meteorological stations (365 observations each). We display for 5 stations in the training set the empirical distribution (histogram and pointwise Kernel Density Estimator) and SLGP-based estimation. }
	\label{fig:meteo_at_stations_with_data}
\end{figure}

This is expected: 365 replicates per station yield a point-wise estimator that reasonably captures these structures. Indeed, the pointwise Kernel Density Estimators yield visually similar results. The main difference being that KDE tends to smooth the empirical distributions, while the SLGP does not as much (e.g. Basel / Binningen or Col du Grand St-Bernard for such events).



This motivates us to look at predictions for the three stations that we left out of the data set, to assess the extrapolation capabilities of SLGP models. 
We observe that the resulting estimates, displayed in Figure~\ref{fig:meteo_at_stations_without_data}, present more variability at stations left out of the training set, a desirable feature.  

\vspace{-8pt}
\begin{figure}[H]
	\centering
	\includegraphics[width=0.9\linewidth]{./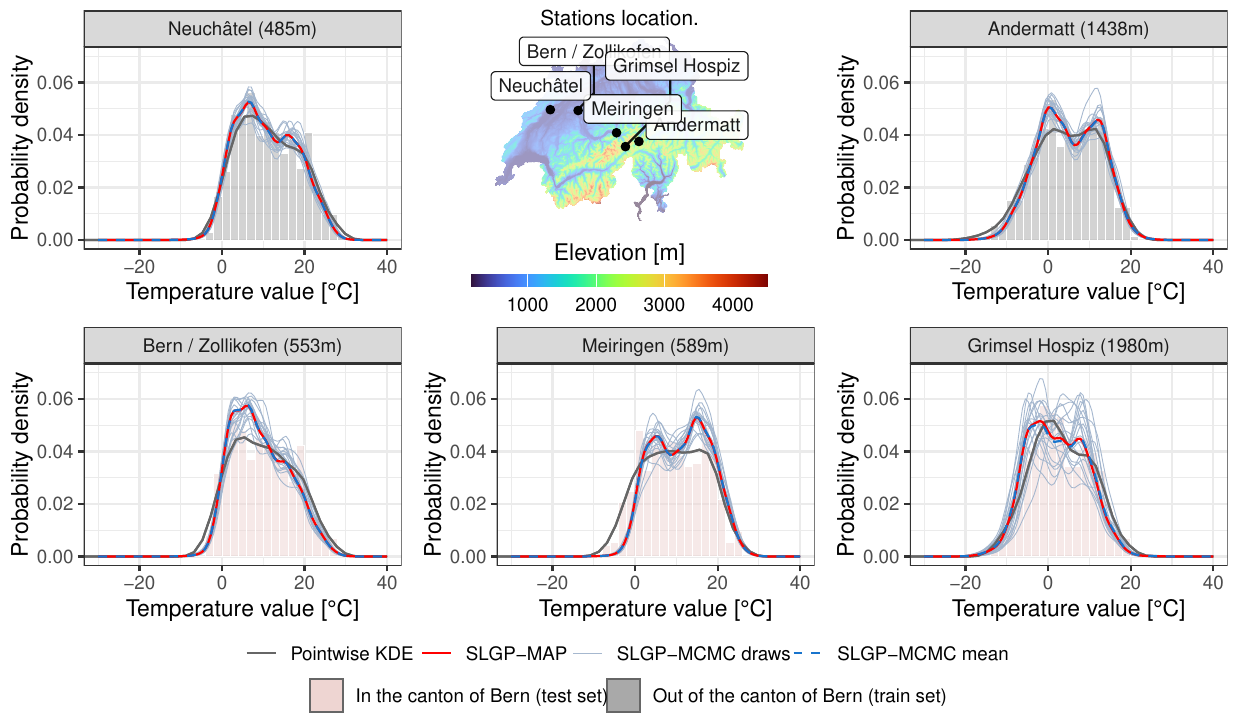}
	\caption{We display for 3 stations in the test set and 2 in the train set the empirical distribution (histogram and pointwise Kernel Density Estimator) and SLGP-based estimation.}
	\label{fig:meteo_at_stations_without_data}
\end{figure}

However, further assessment of the estimation quality calls for principled approaches for the evaluation of probabilistic predictions of probability density functions, a topic that seems in its infancy. Yet, visual inspections suggest a fair extrapolation skill with some room for improvement, opening the way for further work on modelling and implementation choices. 

Finally, since the predictive capabilities of SLGP models are not limited to stations, we can perform statistical inference at any location in Switzerland. As a proof of concept, we now perform a joint quantile estimation, which is an immediate by-product of SLGP modelling. 

Directly visualising and aggregating these quantiles and uncertainties in 2D can be challenging. To provide clearer and more intuitive visualisations, we focus on an arbitrary slice crossing Switzerland. This slice allows us to explore the model’s predictions and uncertainties in a more interpretable way. The results are displayed in Figure~\ref{fig:SLGP_Meteo_slice_quantiles}

\begin{figure}[H]
	\centering
	\includegraphics[width=1\linewidth]{./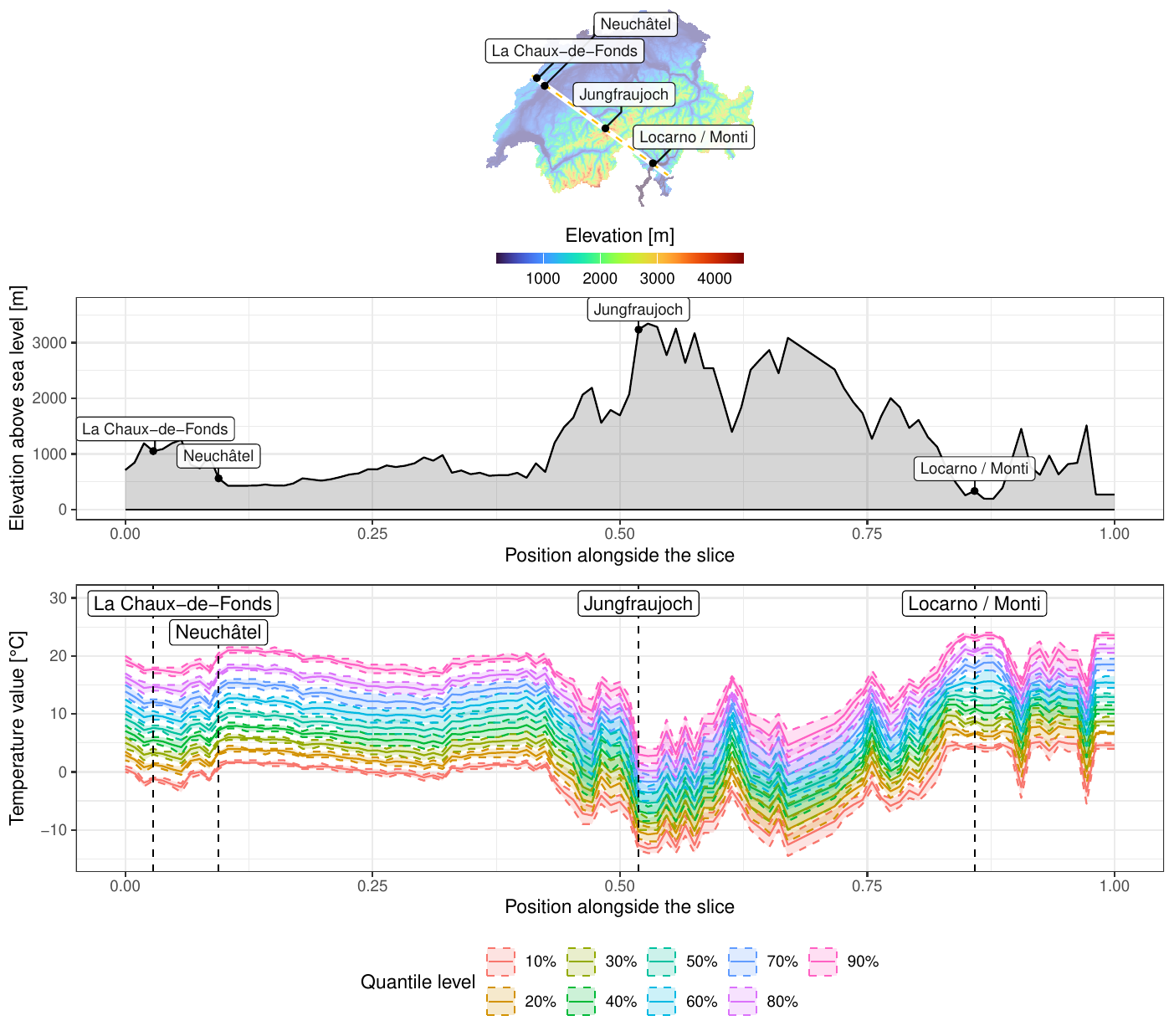}
	\caption{[Top] Map of Switzerland and slice considered. [Middle] Elevation alongside the slice and Stations location. [Bottom] Simultaneous quantile prediction (mean estimated value and 10\%-90\% quantile bands) across a slice of Switzerland.}
	\label{fig:SLGP_Meteo_slice_quantiles}
\end{figure}


The application is visually and intuitively convincing and did not present any major issues. In particular, SLGP models are able to capture trends on the temperature distributions, notably by identifying that temperature decreases when the altitude increases \footnote{According to the Swiss federal office for meteo, with every 100 metres, the temperature drops by an average of 0.65°C. \href{https://www.meteoswiss.admin.ch/weather/weather-and-climate-from-a-to-z/temperature/decreases-in-temperature-with-altitude.html}{https://www.meteoswiss.admin.ch/weather/weather-and-climate-from-a-to-z/temperature/decreases-in-temperature-with-altitude.html}}. This highlights that even when using a really simple model, with the altitude treated as an additional indexing variable rather than being incorporated as a covariate in the trend, SLGP models still yield promising insights in their generalisation abilities.

We believe that these could be further improved through the use of more appropriate kernels. Additionally, incorporating trends represents a meaningful direction for future research to enhance the realism of SLGP models. Nonetheless, our decision to adopt a general approach was motivated by the intention to provide a broadly applicable demonstration, rather than a solution overly tailored to this specific application.

\section{Conclusion and discussions}

In this paper, we investigated a class of models for non-parametric density field estimation. We revisited the Logistic Gaussian Process (LGP) model from the perspective of random measures, thoroughly investigating its relation to the underlying random processes and achieving some characterization in terms of increment covariances. We then built upon these investigations to further study Spatial logistic Gaussian process models, with a focus on their relationships with random measure fields. We demonstrated that when the underlying random field is continuous, SLGPs are characterised among random field of probability densities by the Gaussianity of associated log-increments.

Due to this particular structure, the SLGPs inherit their spatial regularity properties directly from the (Gaussian) field of increments of their log. This allowed us to leverage the literature of Gaussian Processes and Gaussian Measures to derive SLGP properties. We extended the notion of mean-square continuity  to  random  measure  fields and  established  sufficient  conditions  on  covariance  kernels underlying SLGPs for associated models to enjoy such properties.

We presented an implementation of the density field estimation that accommodates the need to estimate hyper-parameters of the model as well as the computational cost of this estimation. Our approach relies on Random Fourier Features, and we demonstrated its applicability of this approach in synthetic cases as well as on a meteorological application.

Several directions are foreseeable for future research. Extensive posterior consistency results as well as contraction rates are already available for logistic Gaussian processes \citep{tokdar_posterior_2007, van_der_vaart_rates_2008} and could be extended to the setting of SLGPs. We note that the posterior consistency is already studied when the index $\xX$ is considered as random (as presented in 
Section 3 of the supplementary material \citep{gautier_supplementary_2023}
, following Pati et al. 2013 \citep{pati_posterior_2013}), but the case of deterministic $\xX$ is still to be studied, as well as the posterior contraction rates. An analysis of the misspecified setting could complete such analysis \citep{kleijn_misspecification_2006}. Our study of the spatial regularity properties of the SLGP could also be complemented, as noted in Remark \ref{rmrk:uniform_approximation_GP}. 

In addition, further work towards evaluating the predictive performance of our model is needed. So far, we only relied on a squared integrated Hellinger distance for analytical settings where the reference field was known, and on qualitative and visual validation for the meteorological application. Comparing samples to predicted density fields calls for investigations in the field of scoring probabilistic forecasts of (fields of) densities. Therefore, evaluating the performances of the SLGP under several kernel settings, and comparing them to each other and to baseline methods is another main research direction.

As for implementation, we already highlighted in the Section \ref{sec:app}, that the current implementation leaves room to improvement. So far, choosing the basis functions in the finite rank implementation, the hyper-parameters prior, the trend of the GP, and even the range $\xI$ to consider was done mostly on the basis of expert knowledge or trial and errors. We would greatly benefit from development in the methodology, as this might reduce some computational costs while improving the quality of predictions. A more efficient implementation would also allow us to use the SLGP model at higher scales (i.e. with more data points and higher dimensions).

The SLGP could be highly instrumental for Bayesian inference, even more so as the latter progress would be achieved. We already started exploring its applicability to stochastic optimisation \citep{gautier_goal-oriented_2021}, and are optimistic about the SLGP's potential for speeding up Approximate Bayesian Computations \citep{gautier_probabilistic_2020}. Indeed, the flexibility of the non-parametric model allows for density estimation and it allows us to generate plausible densities. Having a generative model proves to be very beneficial, as it allows, for instance to derive experimental designs \citep{bect_supermartingale_2019}.

\newpage 
\appendix
\section{Appendix: Details of the proofs in section~\ref{sec:cty}}
\label{app:fullproofs}
For these proofs, we will 
need to bound the entropy number coming into play in Dudley's theorem. For a set $S\subset T$ and $\epsilon >0$, we denote by $N(\epsilon, I, d_Z)$ the entropy number, i.e. the minimal number of (open) $d_Z$-balls of radius $\epsilon$ required to cover $S$.

\begin{lemma}
\label{lemma:bound_entropy_supnorm}
For $d \geq 1$, and $\xI$ a convex, compact subset of $\xR^d$, we recall that if $\epsilon \geq \text{diam}(\xI)$, then $N(\epsilon, \xI, \Vert \cdot \Vert _\infty)=1$. Let $\text{Vol}$ be the volume, and $B_{1}^d$ the $d$-dimensional unit ball, we have:
\begin{equation}
		\left(\frac{1}{\epsilon}\right)^{d} \frac{\text{Vol}(\xI)}{\text{Vol}(B_{1}^d)} \leq N(\epsilon, \xI , \Vert \cdot \Vert _\infty)
	\end{equation}
	Additionally, if $\epsilon < \text{diam}(\xI^d )$:
	\begin{equation}
	\label{eq:right_side_covering}
		 N(\epsilon, \xI , \Vert \cdot \Vert _\infty) \leq \left(\frac{4}{\epsilon}\right)^{d} \frac{\text{Vol}(\xI)}{\text{Vol}(B_{1}^d)} 
	\end{equation}

\end{lemma}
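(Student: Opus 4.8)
The two inequalities are both instances of the standard volume‑comparison principle for covering numbers, so the plan is to handle each direction by comparing Lebesgue volumes of $\Vert\cdot\Vert_\infty$–balls. The one fact I would record at the outset is that an open ball $B(c,r):=\{y:\Vert y-c\Vert_\infty<r\}$ is an axis‑aligned cube of side $2r$, hence $\text{Vol}(B(c,r))=(2r)^d=r^d\,\text{Vol}(B_1^d)$, where $B_1^d$ is the $\Vert\cdot\Vert_\infty$–unit ball and $\text{Vol}(B_1^d)=2^d$. The degenerate regime $\epsilon\geq\text{diam}(T)$, in which a single ball covers $T$ and $N=1$, is already recorded in the statement, so I would restrict to $\epsilon<\text{diam}(T)$.

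For the lower bound I would take any optimal cover $B(c_1,\epsilon),\dots,B(c_N,\epsilon)$ of $T$ with $N=N(\epsilon,T,\Vert\cdot\Vert_\infty)$. Subadditivity of Lebesgue measure gives $\text{Vol}(T)\leq\sum_{i=1}^N\text{Vol}(B(c_i,\epsilon))=N\,\epsilon^d\,\text{Vol}(B_1^d)$, and rearranging yields exactly $\left(\tfrac{1}{\epsilon}\right)^d\tfrac{\text{Vol}(T)}{\text{Vol}(B_1^d)}\leq N$. This direction is purely measure–theoretic and uses neither convexity nor the constraint $\epsilon<\text{diam}(T)$.

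For the upper bound I would pass through a maximal packing. Let $\{x_1,\dots,x_M\}\subseteq T$ be a maximal $\epsilon$–separated set, i.e.\ with pairwise distances $\Vert x_i-x_j\Vert_\infty\geq\epsilon$. Maximality forces every point of $T$ to lie within distance $<\epsilon$ of some $x_i$ (otherwise it could be added), so the balls $B(x_i,\epsilon)$ cover $T$ and hence $N(\epsilon,T,\Vert\cdot\Vert_\infty)\leq M$; it thus suffices to bound $M$. The separation makes the half‑radius balls $B(x_i,\epsilon/2)$ pairwise disjoint, whence $\sum_i\text{Vol}\big(B(x_i,\epsilon/2)\cap T\big)\leq\text{Vol}(T)$. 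The claim then follows if each term obeys the local estimate $\text{Vol}\big(B(x_i,\epsilon/2)\cap T\big)\geq 2^{-d}\,\text{Vol}(B(x_i,\epsilon/2))=2^{-d}(\epsilon/2)^d\,\text{Vol}(B_1^d)$, since summing gives $M\,2^{-d}(\epsilon/2)^d\,\text{Vol}(B_1^d)\leq\text{Vol}(T)$, that is $M\leq\left(\tfrac{4}{\epsilon}\right)^d\tfrac{\text{Vol}(T)}{\text{Vol}(B_1^d)}$.

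The hard part will be exactly this local volume‑fraction estimate, which is the only place where convexity and the spread hypothesis $\epsilon<\text{diam}(T)$ are used. The mechanism is that, because $T$ reaches out from $x_i$ by at least $\epsilon$ in the $\Vert\cdot\Vert_\infty$ sense, convexity (every segment $[x_i,z]$ with $z\in T$ stays in $T$) traps a fixed proportion of the cube $B(x_i,\epsilon/2)$ inside $T$. I would make this quantitative coordinate by coordinate: it is cleanest when $T$ is a product of intervals (the relevant case here, since $\xI$ and $D\times\xI$ are such products), where one checks that in each coordinate the section of $T$ through $x_i$ is long enough that at least half of the interval $(x_{i,j}-\epsilon/2,\,x_{i,j}+\epsilon/2)$ remains in $T$, so that the one‑dimensional sections multiply to the factor $2^{-d}$. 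Pinning down the precise fatness that guarantees this uniformly over the $x_i$ — and confirming it is supplied by $\epsilon<\text{diam}(T)$ on the domains at hand — is the delicate point of the argument; the rest is bookkeeping with the volume of cubes.
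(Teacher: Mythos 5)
The paper does not actually prove this lemma --- it is ``recalled'' as a standard fact and used directly --- so your argument has to stand on its own. Your lower bound does: covering $T$ by $N$ open sup-norm balls of radius $\epsilon$, each of volume $(2\epsilon)^d=\epsilon^d\,\text{Vol}(B_1^d)$, and invoking subadditivity gives exactly the stated inequality, with no convexity needed. The packing skeleton of your upper bound (maximal $\epsilon$-separated set $\Rightarrow$ cover, half-radius balls pairwise disjoint, constant bookkeeping yielding $(4/\epsilon)^d$) is also the standard and correct scaffolding.

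The genuine gap is the local volume-fraction estimate you defer to the end, $\text{Vol}\bigl(B(x_i,\epsilon/2)\cap T\bigr)\geq 2^{-d}\,\text{Vol}\bigl(B(x_i,\epsilon/2)\bigr)$, and it cannot be supplied by the hypotheses ``$T$ convex, compact, $\epsilon<\text{diam}(T)$'' alone. Take $T=[0,1]\times[0,\delta]\subset\xR^2$ with $\delta$ small and $\epsilon=1/2$: then $\epsilon<\text{diam}(T)$, yet for any $x\in T$ one has $\text{Vol}\bigl(B(x,\epsilon/2)\cap T\bigr)\leq\delta\epsilon$, so the fraction is at most $\delta/\epsilon\to 0$; indeed the asserted upper bound $(4/\epsilon)^d\,\text{Vol}(T)/\text{Vol}(B_1^d)=16\delta$ drops below $1\leq N(\epsilon,T,\Vert\cdot\Vert_\infty)$, so the inequality you are trying to prove is itself false for thin convex bodies. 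What the argument really needs is an inradius-type condition --- e.g.\ that $T$ contains a sup-norm ball of radius $\epsilon/2$, or, in the product-of-intervals case you single out, that \emph{every} side length is at least $\epsilon/2$, not just the largest one (which is all that $\epsilon<\text{diam}(T)$ controls in the sup norm). Under such a condition your coordinate-by-coordinate computation closes the proof, as does the more common route bounding $\text{Vol}\bigl(T+(\epsilon/2)B_1^d\bigr)\leq(1+\epsilon/(2r))^d\,\text{Vol}(T)$ when $T$ contains a ball of radius $r\geq\epsilon/2$. So you have correctly isolated the delicate point; the honest conclusion is that it is not a technicality to be checked but a missing hypothesis, which affects the lemma as stated in the paper as well.
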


\begin{proof}[Proof of Proposition~\ref{prop:Sup_norm_dif_holder}]
	\label{proof:Sup_norm_dif_holder}
	For fixed $(\xX, \xX') \in D^2$ and $Z \sim \mathcal{GP}(0, k)$, we note $\dxx$ the canonical distance associated to $\proc{Z}{\xX, \cdot}{}- \proc{Z}{\xX', \cdot}{}$, defined by:
	\begin{equation}
		\dxx^2(t, t') = \mathbb{E} \left[ \left(   [\proc{Z}{\xX, t}{}- \proc{Z}{\xX', t}{}] - [\proc{Z}{\xX, t'}{}- \proc{Z}{\xX', t'}{}] \right)^2 \right] \forall (t, t') \in \xI^2
	\end{equation}
	Using the Hölder condition on $k$ we note that we have simultaneously:
		\begin{align}
			 \dxx^2(t, t') \leq 3C \Vert \xX - \xX' \Vert^{\alpha_1}_\infty \forall (t, t') \in \xI^2\\
			 \dxx^2(t, t') \leq 4C \vert t - t' \vert^{\alpha_2} \forall (t, t') \in \xI^2
		\end{align}

	By Dudley's theorem, we can write :
	\begin{equation}
		\begin{array}{ll}
			\xM & \leq 24 \displaystyle\int_0^\infty \sqrt{\log(N(\epsilon, \xI, \dxx))} \,d\epsilon \\
			& \leq 24 \displaystyle\int_0^{\diam}\sqrt{\log(N(\epsilon, \xI, \dxx))} \,d\epsilon \\
		\end{array} 
	\end{equation}
	where $\diam$ stands for the diameter of $\xI$ with respect to the canonical distance associated to $\dxx$. As $\ \dxx^2(t, t') \leq 4C \vert t - t' \vert^{\alpha_2}$, we can combine the bounds stated in Lemma (\ref{lemma:bound_entropy_supnorm}) with the inequality
	\[N(\epsilon, \xI, \dxx) \leq N\left(\left(\dfrac{\epsilon}{4C}\right)^{2/\alpha_2},   \xI, \Vert \cdot \Vert _\infty  \right)\] 
 It follows that
	\begin{equation}
			M(\xX, \xX')  \leq 24 \sqrt{\dfrac{2 \dimI}{\alpha_2}} \displaystyle\int_0^{\diam} \sqrt{\log \left( \dfrac{K}{\epsilon} \right) } \,d\epsilon
	\end{equation}
	where $K := C 4^{1+\alpha_2/2}\left( \dfrac{\text{Vol}(\xI)}{\text{Vol}(B_{1}^{\dimI})}\right) ^{\alpha_2/(2\dimI)}$ and $B_{1}^{\dimI}$ stands for the $\dimI$-dimensional unit ball for $\Vert\cdot\vert_\infty$. To further compute the right-hand term, we introduce the error function, defined as $\text{erf}(x) = \dfrac{2}{\sqrt{\pi}} \int_0^x e^{-t^2} \,dt$.
	\begin{gather}
			\displaystyle\int_0^{\diam} \sqrt{\log \left( \dfrac{K}{\epsilon} \right) } \,d\epsilon 
			=  \left[ \epsilon \sqrt{\log \dfrac{K}{\epsilon } } -\dfrac{\sqrt{\pi}}{2} K \text{erf}\left( \sqrt{\log \dfrac{K}{\epsilon }}  \right) \right]^{\diam}_0\\
			= \diam  \sqrt{\log  \dfrac{K}{\diam }}  + K \displaystyle\int_{\sqrt{\log ( K/\diam ) } }^{\infty} e^{-t^2} \,dt
	\end{gather}
		Since for $y>0$, $\frac{2}{\sqrt{\pi}} \int_{y}^{\infty} e^{-t^2} \,dt \leq e^{-y^2}$, we also have:
				
	\begin{equation}
			M(\xX, \xX')  \leq 24 \sqrt{\dfrac{2 \dimI}{\alpha_2}} \left( \diam  \sqrt{\log \dfrac{K}{\diam } } +  \dfrac{\sqrt{\pi} \diam  }{2 K}   \right)
	\end{equation}
	Then, for any $0<\delta < \dfrac{\alpha_1}{2}$, $D$ being compact and considering that \[ y\left( \log(K/y ) \right)^{1/2} \underset{y \to 0}{=} o \left(y^{1- 2\delta / \alpha_1 } \right) \] we can conclude that there exists $K_\delta$ such that:
	\begin{equation}
		\begin{array}{rl}
			M(\xX, \xX')  \leq & K_\delta \dfrac{1}{(3 C )^{1/2 - \delta / \alpha_1}} \left( \diam^{1- 2\delta / \alpha_1 }   \right)\end{array} 
	\end{equation}
	
	Finally, by Equation~\ref{eq:Holder_d_kxx}, we have $\diam \leq \sqrt{3C} \xY $, and we can conclude that: 
	\begin{equation}
		\begin{array}{ll}
			M(\xX, \xX')  \leq K_\delta \Vert \xX - \xX' \Vert^{\alpha_1 /2 -\delta}
		\end{array} 
	\end{equation}
\end{proof}

\begin{proof}[Proof of Theorem \ref{th:Expected_quadratic_cty}]
\label{proof:Expected_quadratic_cty}
	Let us consider $(\xX, \xX') \in D^2$ and $\gamma > 0$.
	By Lemma~\ref{pty:bound_logtransf_sup}, there exists two constants $C_{KL}, C_{TV} > 0$ such that:
	
	\begin{equation}
		\label{eq:proof_cty_first}
		\begin{array}{c}
			\mathbb{E} \left[ 
			d_H(\proc{Y}{\xX, \cdot}, \proc{Y}{\xX', \cdot}{})^\gamma 
			\right] 
			\leq  \mathbb{E} 
			\left[ 
			\Vert \proc{Z}{\xX, \cdot}{} - \proc{Z}{\xX', \cdot}{} \Vert_\infty^\gamma e^{\Vert \proc{Z}{\xX, \cdot}{} - \proc{Z}{\xX', \cdot}{} \Vert_\infty \gamma/2}
			\right] \\			
			\mathbb{E} \left[ 
			KL(\proc{Y}{\xX, \cdot}{}, \proc{Y}{\xX', \cdot}{})^\gamma 
			\right] 
			\leq  C_{KL} \mathbb{E} 
			[f_1(	\Vert \proc{Z}{\xX, \cdot}{} - \proc{Z}{\xX', \cdot}{} \Vert_\infty) ]
			\\				
			\mathbb{E} \left[ 
			d_{TV}(\proc{Y}{\xX, \cdot}{}, \proc{Y}{\xX', \cdot}{})^\gamma 
			\right] 
			\leq  C_{TV} \mathbb{E} 
			[f_2(	\Vert \proc{Z}{\xX, \cdot}{} - \proc{Z}{\xX', \cdot}{} \Vert_\infty) ] 
		\end{array}
	\end{equation}
	where $f_1(x) = x^{2 \gamma} \left( 1+x \right)^{\gamma} e^{\gamma x}$ and $f_2(x) = x^{2 \gamma} \left( 1+x \right)^{2\gamma} e^{\gamma x}$.\\
	
	We consider the three functions, defined for $\gamma, M, y>0$:
	\begin{equation}
		\begin{array}{rl}
			f_{H, \gamma, M}(y) &= (My)^\gamma e^{\frac{M \gamma}{2} y}\\
			f_{KL, \gamma, M}(y) &= (My)^{2\gamma} (1+My)^{\gamma} e^{M \gamma y}\\
			f_{TV, \gamma, M}(y) &= (My)^{2\gamma} (1+My)^{2\gamma} e^{M \gamma y}\\
		\end{array}
	\end{equation}
	
	Then, if we consider $\xM = \mathbb{E} \left[ \Vert \proc{Z}{\xX, \cdot}{} - \proc{Z}{\xX', \cdot}{} \Vert_\infty \right] $, the previous inequalities can be rewritten as: 
	\begin{equation}
		\begin{array}{c}
			\mathbb{E} \left[ 
			d_H(\proc{Y}{\xX, \cdot}{}, \proc{Y}{\xX', \cdot}{})^\gamma 
			\right] 
			\leq \mathbb{E} \left[ 
			f_{H, \gamma, \xM}
			\left(
			\dfrac{\Vert \proc{Z}{\xX, \cdot}{} - \proc{Z}{\xX', \cdot}{} \Vert_\infty}{\xM}
			\right)
			\right] \\
			
			\mathbb{E} \left[ 
			KL(\proc{Y}{\xX, \cdot}{}, \proc{Y}{\xX', \cdot}{})^\gamma 
			\right] 
			\leq C_{KL} \cdot \mathbb{E} \left[ 
			f_{KL, \gamma, \xM}
			\left(
			\dfrac{\Vert \proc{Z}{\xX, \cdot}{} - \proc{Z}{\xX', \cdot}{} \Vert_\infty}{\xM}
			\right)
			\right] \\

			\mathbb{E} \left[ 
			d_{TV}(\proc{Y}{\xX, \cdot}{}, \proc{Y}{\xX', \cdot}{})^\gamma 
			\right] 
			\leq C_{TV} \cdot \mathbb{E} \left[ 
			f_{TV, \gamma, \xM}
			\left(
			\dfrac{\Vert \proc{Z}{\xX, \cdot}{} - \proc{Z}{\xX', \cdot}{} \Vert_\infty}{\xM}
			\right)
			\right]
		\end{array}
	\end{equation}
	
	By Fernique theorem (cf Proposition 2 of Section 1 in the supplementary material \citep{gautier_supplementary_2023} 
 ), there exists universal constant $\alpha$, $K > 0$, as well as $C_{H, \gamma, M}, C_{KL, \gamma, M}$ and $C_{TV, \gamma, M} > 0$  such that:
	
	\begin{equation}
		\begin{array}{c}
			\mathbb{E} \left[ 
			d_H(\proc{Y}{\xX, \cdot}{}, \proc{Y}{\xX', \cdot}{})^\gamma 
			\right] 
			\leq C_{H, \gamma, \xM} \cdot K \\
			
			\mathbb{E} \left[ 
			KL(\proc{Y}{\xX, \cdot}{}, \proc{Y}{\xX', \cdot}{})^\gamma 
			\right] 
			\leq C_{KL} \cdot C_{KL, \gamma, \xM} \cdot K \\

			\mathbb{E} \left[ 
			d_{TV}(\proc{Y}{\xX, \cdot}, \proc{Y}{\xX', \cdot})^\gamma 
			\right] 
			\leq C_{TV} \cdot C_{TV, \gamma, \xM} \cdot K
		\end{array}
	\end{equation}
	
	Detailed expressions of $C_{H, \gamma, M}, C_{KL, \gamma, M}$ and $C_{TV, \gamma, M}$ are given below this proof, and were derived with the tightness of our bounds in mind. We note that these coefficients seen as functions of $M$ are continuous, strictly positive for any $M > 0$ and that:
	\begin{equation}
		\begin{array}{c}
			C_{H, \gamma, M} \underset{M \to 0}{\sim} M^\gamma \left( \dfrac{\gamma}{2 \alpha} \right)^{\gamma / 2 } \exp\left\{ -\dfrac{\gamma}{2}\right\}\\
			
			C_{KL, \gamma, M} \underset{M \to 0}{\sim} M^{2 \gamma} \left( \dfrac{\gamma}{ \alpha} \right)^{\gamma } \exp\left\{ -\gamma \right\} \\
			
			C_{TV, \gamma, M} \underset{M \to 0}{\sim} M^{2 \gamma} \left( \dfrac{\gamma}{ \alpha} \right)^{\gamma } \exp\left\{ -\gamma \right\}  \\	
		\end{array}
	\end{equation} 
	This equivalence allows us to state that for a given $\gamma > 0$, there exists a rank $M_0 > 0$ and a constant $\kappa_{\gamma, 1} > 1$ such that for any $M < M_0$:
	\begin{equation}
		\begin{array}{ccc}
			C_{H, \gamma, M} \leq \kappa_{\gamma, 1}  M^\gamma, & C_{KL, \gamma, M} \leq \kappa_{\gamma, 1}  M^{2 \gamma}, & 	C_{TV, \gamma, M} \leq \kappa_{\gamma, 1}  M^{2 \gamma}
		\end{array}
	\end{equation} 
	
	We also observe that if $M$ is bounded, as $C_{H, \gamma, M}, C_{KL, \gamma, M}$ and $C_{TV, \gamma, M}$ seen as function of $M$ are continuous and strictly positive, there exists a constant $\kappa_{\gamma, 2}  > 0$ such that for values of $M \geq M_0$:
	\begin{equation}
		\begin{array}{ccc}
			C_{H, \gamma, M} \leq \kappa_{\gamma, 2}  M^\gamma, & C_{KL, \gamma, M} \leq \kappa_{\gamma, 2}  M^{2 \gamma}, & 	C_{TV, \gamma, M} \leq \kappa_{\gamma, 2}  M^{2 \gamma}
		\end{array}
	\end{equation} 
	
	Combining these two observations, and $\xM$ being bounded, we conclude that for any $\gamma > 0$ there exist $\kappa_{\gamma}$ such that:
	
	\begin{equation}
		\begin{array}{c}
			\mathbb{E} \left[ 
			d_{H}(\proc{Y}{\xX, \cdot}{}, \proc{Y}{\xX', \cdot}{})^\gamma 
			\right] \leq \kappa_{\gamma} \xM^{\gamma} \\
			\mathbb{E} \left[ 
			KL(\proc{Y}{\xX, \cdot}{}, \proc{Y}{\xX', \cdot}{})^\gamma 
			\right] \leq \kappa_{\gamma} \xM^{2 \gamma} \\
			\mathbb{E} \left[ 
			d_{TV}(\proc{Y}{\xX, \cdot}{}, \proc{Y}{\xX', \cdot}{})^\gamma 
			\right] \leq \kappa_{\gamma} \xM^{2 \gamma} \\
		\end{array}
	\end{equation}
	This argument relies on an equivalence at zero. Therefore, it ensures that the convergence rates are not degraded when bounding $C_{H, \gamma, M}, C_{KL, \gamma, M}$ and $C_{TV, \gamma, M}$, and that our bounds are still tight.
	
	Finally, using Proposition~\ref{prop:Sup_norm_dif_holder}, stating that for all $\delta > 0$, there exists $K_\delta$ such that:
	\begin{equation}
		\ M(\xX, \xX') \leq K_\delta \Vert \xX - \xX' \Vert^{\alpha_1 /2 -\delta}_\infty
	\end{equation}
	
	We conclude that for all $\gamma >0, \delta > 0$, there exists a constant $K_{\gamma, \delta}$ such that:
	
	\begin{equation}
		\label{eq:proof_cty_final}
		\begin{array}{c}
			\mathbb{E} \left[ 
			d_{H}(\proc{Y}{\xX, \cdot}{}, \proc{Y}{\xX', \cdot}{})^\gamma 
			\right] \leq K_{\gamma, \delta}  \Vert \xX - \xX' \Vert^{\gamma \alpha_1 /2 -\delta}_\infty  \\
			\mathbb{E} \left[ 
			KL(\proc{Y}{\xX, \cdot}{}, \proc{Y}{\xX', \cdot}{})^\gamma 
			\right] \leq K_{\gamma, \delta}  \Vert \xX - \xX' \Vert^{\gamma \alpha_1 -\delta}_\infty   \\
			\mathbb{E} \left[ 
			d_{TV}(\proc{Y}{\xX, \cdot}{}, \proc{Y}{\xX', \cdot}{})^\gamma 
			\right] \leq K_{\gamma, \delta}  \Vert \xX - \xX' \Vert^{\gamma \alpha_1 -\delta}_\infty \\
		\end{array}
	\end{equation}
\end{proof}

\begin{proof}[Finding the constants in Proof of Theorem \ref{th:Expected_quadratic_cty}]
For fixed $M$, $\gamma >0$, we consider the following three functions, for $x\geq 0$:
\begin{equation}
	\begin{array}{rl}
		f_{H, \gamma, M}(x) &= (Mx)^\gamma e^{\frac{M \gamma}{2} x}\\
		f_{KL, \gamma, M}(x) &= (Mx)^{2\gamma} (1+Mx)^{\gamma} e^{M \gamma x}\\
		f_{TV, \gamma, M}(x) &= (Mx)^{2\gamma} (1+Mx)^{2\gamma} e^{M \gamma x}\\
	\end{array}
\end{equation}

For $\alpha >0$,  we look for constants $C_{H, \gamma, M}, C_{KL, \gamma, M}, C_{TV, \gamma, M}$, satisfying:
\begin{equation}
	\begin{array}{rl}
		f_{H, \gamma, M}(x) & \leq C_{H, \gamma, M} e^{\alpha x^2}\\
		f_{KL, \gamma, M}(x) & \leq C_{KL, \gamma, M} e^{\alpha x^2}\\
		f_{TV, \gamma, M}(x) & \leq C_{TV, \gamma, M} e^{\alpha x^2}\\
	\end{array}
\end{equation}

such constants satisfy:
\begin{equation}
	\label{eq:ineq_C_expx2}
	\begin{array}{l}
		\sup\limits_{x \geq 0} \ g_{H, \gamma, M}(x) := f_{H, \gamma, M}(x) e^{-\alpha x^2} \leq C_{H, \gamma, M} \\
		\sup\limits_{x \geq 0} \ g_{KL, \gamma, M}(x):= f_{KL, \gamma, M}(x) e^{-\alpha x^2} \leq C_{KL, \gamma, M} \\
		\sup\limits_{x \geq 0} \ g_{TV, \gamma, M}(x) := f_{TV, \gamma, M}(x) e^{-\alpha x^2} \leq C_{TV, \gamma, M} \\
	\end{array}
\end{equation}

Studying the variations of $g_{H, \gamma, M}(x)$ simply involves finding the roots of a degree 2 polynomial and yields that this function attains its supremum at: 
\begin{equation}
	x_{H, \gamma, M} = \dfrac{M \gamma + 2 \sqrt{M^2 \gamma^2 + 8 \alpha \gamma}}{8 \alpha}
\end{equation}
Therefore a valid upper bound for :
	\begin{equation}
			C_{H, \gamma, M} = \left(M x_{H, \gamma, M} \right)^\gamma
			\exp\left\{ M \gamma \frac{x_{H, \gamma, M}}{2}- \alpha x_{H, \gamma, M}^2 \right\}
	\end{equation}
This constant is optimal in the sense that it is the smallest constant satisfying inequality \ref{eq:ineq_C_expx2}.

However, studying the variations of $g_{KL, \gamma, M}(x)$ and $g_{TV, \gamma, M}(x)$ is longer as it involves finding the roots of third degree polynomials. In order to simplify the constant, we use the simple property $1+x \leq e^x$ and introduce the bounds :
	\begin{equation}
		\begin{array}{l}
			g_{KL, \gamma, M}(x)\leq (Mx)^{2\gamma} e^{2 M \gamma x -\alpha x^2 } =: h_{KL, \gamma, M}(x) \\
			g_{TV, \gamma, M}(x)  \leq  (Mx)^{2\gamma} e^{3 M \gamma x -\alpha x^2 } =: h_{TV, \gamma, M}(x) \\
		\end{array}
	\end{equation}
These inequalities are tight at $Mx = 0$.

Studying the variations of $h_{KL, \gamma, M}(x)$ and $h_{TV, \gamma, M}(x)$ simply involves finding the roots of a degree 2 polynomial and yields that this function attains their supremum at: 

\begin{equation}
	\begin{array}{l}
		x_{KL, \gamma, M} = \dfrac{M \gamma + \sqrt{M^2 \gamma^2 + 4 \alpha \gamma}}{2 \alpha}\\
		x_{TV, \gamma, M} = \dfrac{3 M \gamma + \sqrt{9 M^2 \gamma^2 + 16 \alpha \gamma}}{4 \alpha}
	\end{array}
\end{equation}
Therefore, we can take the bounds:

	\begin{align}
			C_{KL, \gamma, M} = \left( M x_{KL, \gamma, M} \right)^{2\gamma} \exp\left\{ 2 M\gamma x_{KL, \gamma, M}  - \alpha x_{KL, \gamma, M}^2 \right\} \\
			C_{TV, \gamma, M} = \left( M x_{TV, \gamma, M} \right)^{2\gamma} 
			\exp\left\{ x_{TV, \gamma, M}  - \alpha x_{TV, \gamma, M} ^2 \right\} 
	\end{align} 
These bounds are tight around zero.
\end{proof}

\section*{Acknowledgements}
This contribution is supported by the Swiss National Science Foundation project number 178858. Calculations in section~\ref{sec:app:subsec:an} were performed on UBELIX (\url{http://www.id.unibe.ch/hpc}), the HPC cluster at the University of Bern.\\
The authors would like to warmly thank Wolfgang Polonik for his time and his review of an earlier version of this paper. They are also grateful to Ilya Molchanov and Yves Deville for their knowledgeable and insightful comments.

\bibliographystyle{spbasic}      
\bibliography{references.bib}   

\includepdf[pages=-]{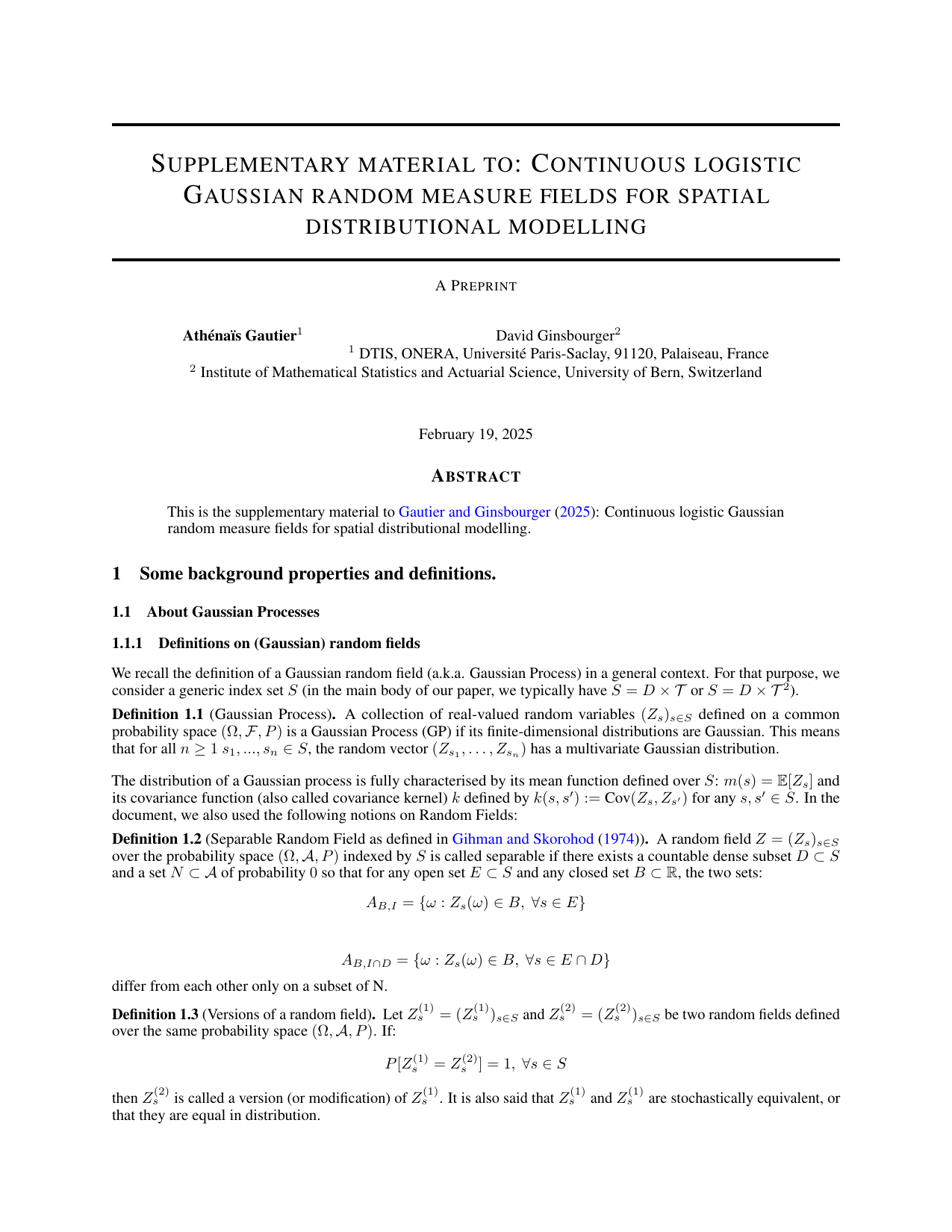}
\end{document}